\pgfplotsset{compat=1.11}
\pgfplotsset{compat=1.11}
\newcolumntype{e}{>{\displaystyle}r @{\,} >{\displaystyle}c @{\,} >{\displaystyle}l}
\theoremstyle{plain}
\newtheorem{Theorem}{Theorem}[section]
\newtheorem{Lemma}[Theorem]{Lemma}
\newtheorem{Proposition}[Theorem]{Proposition}
\theoremstyle{definition}
\newtheorem{Definition}[Theorem]{Definition}
\newtheorem{Remark}[Theorem]{Remark}
\newcounter{Condition}
\renewcommand{\P}{\mathbb{P}}
\newcommand{\E}{\mathbb{E}}
\newcommand{\N}{\mathbb{N}}
\newcommand{\Z}{\mathbb{Z}}
\renewcommand{\d}{\mathbf{d}}
\newcommand{\GVis}{\operatorname{w}}
\newcommand{\CorG}{{\widetilde{m}}}
\newcommand{\GSt}{\operatorname{H}}
\numberwithin{equation}{section}
  \newcounter{constant}
\def\arraypar#1{\parbox[c]{\textwidth - 2cm}{\centering #1}}
\begin{document}

\title{Abelian oil and water dynamics does not have an absorbing-state phase transition}

\author{Elisabetta Candellero\footnote{ecandellero@mat.uniroma3.it;  Universit\`a Roma Tre, Dip.\ di Matematica e Fisica, Largo S.\ Murialdo 1, 00146, Rome, Italy.} \and Alexandre Stauffer\footnote{a.stauffer@bath.ac.uk;  University of Bath, Dept of Mathematical Sciences, BA2 7AY Bath, UK.} \and Lorenzo Taggi\footnote{lorenzo.taggi@gmail.com;  University of Bath, Dept of Mathematical Sciences, BA2 7AY Bath, UK.}}

\maketitle

\begin{abstract}
The \emph{oil and water} model is an interacting particle system with two types of particles and a dynamics that conserves the number of particles, which belongs to the so-called class of \emph{Abelian networks}. 
Widely studied processes in this class are \emph{sandpiles models} and \emph{activated random walks}, which are known 
(at least for some choice of the underlying graph) to undergo an \emph{absorbing-state phase transition}. This phase transition characterizes the existence of two regimes, depending on the particle density:
a regime of \emph{fixation} at low densities, where the dynamics converges towards an absorbing state and each particle jumps only finitely many times, 
and a regime of \emph{activity} at large densities, where particles jump infinitely often and 
activity is sustained indefinitely. 
In this work we show that the oil and water model is substantially different than sandpiles models and activated random walks, in the sense that it does not undergo an
absorbing-state phase transition and is in the regime of fixation at \emph{all} densities. 
Our result works in great generality: for \emph{any} graph that is vertex transitive and for a large class of initial configurations.
\end{abstract}

\section{Introduction}
\label{sect:introduction}
We consider an interacting particle system called \emph{oil and water}, which is defined as follows. 
There are two types of particles, which we call \emph{oils} and \emph{waters}. 
Take $G=(V(G),E(G))$ to be an infinite graph, 
and let $\nu$ be a probability measure on the set of non-negative integers $\N$.
The initial configuration of particles is distributed as a product of independent random variables distributed as $\nu$; that is, 
at each vertex $x\in V(G)$ place a random number of oils and independently a random number of waters, both values sampled from the distribution $\nu$.
We denote by $\mu=\mu(\nu)$ the expected number of particles at a given vertex; thus $\mu/2$ is the expectation of a random variable distributed as $\nu$.
We shall refer to this initial configuration as \emph{oil and water at density }$\mu$.

Starting from the above configuration, particles move according to the following dynamics. 
Each vertex of $G$ has an independent Poisson clock of rate $1$. Whenever the clock of a vertex $x$ rings, if $x$ 
hosts at least one oil and one water then it \emph{fires} an oil-water pair: one water and one oil jump independently according to one step of simple random walk on $G$ (that is, each of the two chooses
independently a neighbor of $x$ uniformly at random and jumps there).
On the other hand, if at the time the Poisson clock of $x$ rings, $x$ has no particles or hosts only particles of one type (either oil or water), then $x$ does not fire; in this case we say that $x$ is \emph{stable}.
Note that $x$ may host arbitrarily many particles, but as long as they are all of the same type, $x$ is stable and none of its particles are allowed to jump.
Note also that if we reach a configuration where every vertex of $G$ is stable, which we refer to as a \emph{stable configuration}, then no vertex fires from that time onwards. Thus stable configurations are absorbing states 
for the dynamics. 
%

\emph{Oil and water} has the so-called \emph{Abelian property}~\cite{BondLevine}, which states that the final configuration of the system does not depend on the order at which vertices fire.
This gives that the times at which the Poisson clocks ring are irrelevant.

There are two possible outcomes of the system: either it is \emph{active}, in which each vertex fires infinitely many times, or it \emph{fixates}, that is each vertex fires finitely many times. 
It is easy to check that if one vertex fires infinitely many times, then all vertices do as well.
We show two fundamental properties in Section~\ref{sect:graphical}. 
The first one is \emph{monotonicity} (Lemma~\ref{lemma:monotonicity}), which gives that if oil and water at density $\mu$ fixates, then 
it also fixates for all densities $\mu'\in[0,\mu]$. 
The second property is a 0-1 law (Lemma~\ref{lemma:01law}), which states that given $\mu$ the probability that the system fixates is either 0 or 1.

With the above two properties, and inspired by results for other models having the Abelian property and also satisfying those properties, 
such as \emph{stochastic sandpiles} and \emph{activated random walks}~\cite{RollaSidoravicius,SidoraviciusTeixeira},
one may conjecture that the oil and water model undergoes a phase transition between \emph{activity} and \emph{fixation} at some critical density $\mu_c$. 
(A more thorough discussion of the relation between oil and water and other models with the Abelian property is given in Section~\ref{sec:relatedModels} below.)

The main result of our paper is to show that the above conjecture is not true, for \emph{any} graph $G$, with the only requirement that $G$ is vertex transitive.
%
Let $\P_{\nu}$ denote the probability
law of the oil and water dynamics starting from a configuration of density $\mu=\mu(\nu)$ as above.

\begin{Theorem}\label{thm:fixation}
Let $G$ be an infinite, vertex-transitive graph of finite degree. Then, for any $\nu$ with $\mu=\mu(\nu)<\infty$, 
\[
\P_\nu\bigl[ \text{oil and water fixates}\bigr ]=1.
\]
\end{Theorem}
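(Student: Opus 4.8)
The plan is to reduce the statement to a uniform tightness estimate for a finite‑volume odometer, and then to establish that estimate by exploiting a \emph{braking mechanism} carried by the signed difference between the oil and the water counts. \textbf{Reduction.} Fix $\nu$ with $\mu=\mu(\nu)<\infty$ and a reference vertex $o$. For each $n$ stabilise the dynamics inside the ball $B_n$ of radius $n$ around $o$, killing every particle that leaves $B_n$, and let $u_n(o)$ be the resulting odometer at $o$ (the number of firings of $o$). By the Abelian property and the graphical construction of Section~\ref{sect:graphical}, $u_n(o)$ increases with $n$ to the infinite‑volume odometer $u(o)$, and $\{\text{oil and water fixates}\}=\{u(o)<\infty\}$; this event has probability $0$ or $1$ by Lemma~\ref{lemma:01law}, so by Lemma~\ref{lemma:monotonicity} it suffices to treat one fixed $\mu<\infty$, for which the goal becomes $\lim_{m\to\infty}\sup_{n}\P_\nu[u_n(o)\ge m]=0$.

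\textbf{The difference field.} Write $\delta(x)=(\#\text{oils at }x)-(\#\text{waters at }x)$ and $s(x)=(\#\text{oils at }x)+(\#\text{waters at }x)$. A firing of $x$ leaves $\delta(x)$ unchanged, adds $+1$ to $\delta$ at a uniform random neighbour of $x$, and $-1$ at an \emph{independent} uniform random neighbour; hence, listing firings in any sequential order, $t\mapsto\delta_t(x)$ is a martingale whose jumps occur exactly at firings of neighbours of $x$, each jump of conditional variance $c_d:=2(d-1)/d^{2}>0$ (with $d$ the common degree, using vertex‑transitivity). But $x$ fires only while hosting at least one oil and one water, which forces $s(x)\ge|\delta(x)|+2$ at every firing of $x$. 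So if some vertex fired infinitely often, then (by the ``one implies all'' dichotomy) every vertex would, the quadratic variation of $\delta(\cdot)$ at each vertex would diverge, $|\delta(x)|$ would typically be very large, and sustained activity would require every vertex to carry ever‑growing particle piles — in tension with conservation, since oils and waters are separately conserved and the particle content of a finite region changes only through firings near its boundary, each moving only two particles. This is the heuristic; making it rigorous is the real content.

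\textbf{Quantifying the brake.} To turn this into the tail bound, I would run an energy estimate inside $B_n$ (which stabilises a.s.; note $\E[\sum_{x\in B_n}u_n(x)]<\infty$ follows from the elementary first‑moment identity $\E[u_n(x)]-\tfrac1d\sum_{z\sim x}\E[u_n(z)]\le\mu/2$ coming from $\E[\text{final oils at }x]\ge0$). The ingredients: (i) $\E[\delta_{\mathrm{fin}}(x)^2]=\E[\delta_0(x)^2]+c_d\sum_{z\sim x}\E[u_n(z)]$ by optional stopping on $\delta_t(x)^2-\langle\delta(x)\rangle_t$ (for heavy‑tailed $\nu$ one first truncates the piles so that $\delta_0$ has finite variance); (ii) since the stabilised configuration has a single type at each vertex, $|\delta_{\mathrm{fin}}(x)|=s_{\mathrm{fin}}(x)$ there; (iii) a conservation bound for $\sum_{x\in B_r}s_{\mathrm{fin}}(x)$ and its second moment in terms of the initial mass $\mu|B_r|$ plus a boundary term of order $\sum_{z\in\partial B_r}u_n(z)$. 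Combining (i)--(iii) and iterating over a growing family of concentric balls (equivalently, testing against a suitable summable weight so that the boundary loss is negligible) should force $\sup_n\E[u_n(o)]<\infty$, hence the tail bound. Note that the first‑order identity alone is useless: the dynamics being conservative, it closes up to $0=0$ and yields only a bound on $\E[u_n(o)]$ that diverges with $n$ (of order $n^2$ on $\Z^d$); the genuinely new input is the second‑order behaviour of $\delta$.

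\textbf{Main obstacle.} The hardest point is exactly this quantitative closure: ruling out that activity is fed indefinitely by particles entering a region from outside (conservation alone does not forbid this), and carrying out the martingale computation despite $u_n(z)$ not being a stopping time for the filtration revealing the random neighbours used at successive firings (standard but delicate, handled by truncation or by re‑ordering firings). An equivalent implementation, via the Abelian property, is to add one excited oil--water pair at a time to an already‑stable configuration and to show — again using the difference field — that the induced cascade is a.s.\ finite with a firing count at $o$ that is summable in the distance from the added pair; there the same obstacle resurfaces as the need to bound how far such a cascade propagates.
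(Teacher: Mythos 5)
Your reduction and your ``difference field'' are sound, and in fact the observation that $\delta(x)=\eta^o(x)-\eta^w(x)$ evolves as a lazy martingale whose jumps are driven by firings of the neighbours of $x$, with per-firing variance $2(d-1)/d^2$, is exactly the mechanism behind Lemma~\ref{lemma:number holes} of the paper (there it is used in the form: the reflected walk is recurrent, hence the vertex is a \emph{hole} many times during its neighbours' activity). The first-moment identity you state, and your diagnosis that it closes to $0=0$, are also correct. The problem is the quantitative closure in your third paragraph: it does not work as written. Your identity (i) gives
\begin{equation*}
c_d\sum_{z\sim x}\E[u_n(z)] \;=\; \E[\delta_{\mathrm{fin}}(x)^2]-\E[\delta_0(x)^2] \;=\; \E[s_{\mathrm{fin}}(x)^2]-\E[\delta_0(x)^2],
\end{equation*}
so to bound the odometer you must bound $\E[s_{\mathrm{fin}}(x)^2]$, the second moment of the \emph{final pile at a single site}, from above. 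Conservation controls only $\E\bigl[\sum_{x\in B_r}s_{\mathrm{fin}}(x)\bigr]\approx\mu|B_r|$ (plus boundary terms); it says nothing against concentration, and the best it yields is $\sum_{x}\E[s_{\mathrm{fin}}(x)^2]\le\E\bigl[(\sum_x s_{\mathrm{fin}}(x))^2\bigr]=O(|B_r|^2)$, whence the averaged odometer bound is $O(|B_r|)$ --- divergent, and on $\Z^d$ actually \emph{weaker} than the first-moment bound you discard. Iterating over concentric balls makes this worse, not better. A uniform bound $\E[s_{\mathrm{fin}}(x)^2]=O(1)$ is essentially equivalent to the theorem itself (it follows from your identity once $\sup_n\E[u_n]<\infty$ is known), so assuming it is circular. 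This, not the stopping-time bookkeeping you flag as the ``main obstacle'', is the genuine gap.

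The paper closes the argument by a different accounting. The supermartingale quantity is not $\delta^2$ but the number of \emph{oil--water pairs} $\sum_x\eta^o(x)\wedge\eta^w(x)$: a firing next to a hole strictly decreases its expectation, while all other firings preserve it. One then (a) compensates each such loss by creating a \emph{ghost} particle performing an independent random walk, which restores an exact martingale and yields the Green's-function identity of Lemma~\ref{lemma:BRW}, $\tilde E[\tilde m(y)]=\sum_x(\eta_0^o(x)\wedge\eta_0^w(x))\,G_K(x,y)$; (b) uses the recurrence of your difference walk (Lemma~\ref{lemma:number holes}) to show that, under the activity assumption, at least $10\mu$ ghosts are created per interior site in expectation; and (c) subtracts the ghosts' contribution from the identity in (a) and invokes the Green's-function comparison of Lemma~\ref{lem:PropertiesGreen} to conclude that $\E[m_L(o)]<0$ for large $L$ --- a contradiction, after which Lemma~\ref{lemma:01law} finishes the proof. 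If you want to salvage your route, you would need to replace step (iii) by something playing the role of (a)--(c): some exactly-computable compensated quantity that converts ``the difference walk visits $0$ often'' into a strict, summable decrease of a conserved-in-expectation functional. The second moment of $\delta$ alone does not provide that.
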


Oil and water was introduced in \cite{BondLevine} as an example of an Abelian network that is not unary (that is, which has more than one type of particles); see Section~\ref{sec:relatedModels} below. 
The oil and water model was analyzed in \cite{CandelleroGangulyHoffmanLevine} in a different setting. 
They consider the one-dimensional lattice $\mathbb{Z}$, and let the initial configuration be 
given by $N$ oil-water pairs at the origin, with all other vertices initially unoccupied.
Then the oil and water dynamics is run until a stable configuration is obtained; this occurs in finite time, almost surely, since the number of particles is finite.
In this setting, \cite{CandelleroGangulyHoffmanLevine} investigated several statistics of the model, including how long it takes for the process to stop and how far from the origin particles spread as a function of $N$.

\subsection{Related models}\label{sec:relatedModels}
\emph{Oil and water} was introduced in \cite{BondLevine} within the more general framework of Abelian networks, which 
was introduced by Bond and Levine~\cite{BondLevine} building on the work of Dhar on sandpile models~\cite{Dhar}. 
This framework was created with the goal of 
defining a general concept that includes several widely studied processes, 
such as \emph{Abelian and stochastic sandpiles}, \emph{bootstrap percolation}, \emph{rotor-router networks}, \emph{internal DLA} and 
\emph{activated random walks}.
Informally speaking, a particle system (or, more generally, a cellular automaton) is considered an Abelian Network 
if it satisfies the so-called \emph{Abelian property}, which gives that the final configuration of the system does not depend on the order of the interactions.
In other words, the final configuration is invariant to changes in the order at which vertices fire. 

Abelian networks have been widely studied in several disciplines. 
For example, in computer science, they are a fundamental model in distributed systems, as they do not require any central synchronization or shared memory, see~\cite{BondLevine} for more details.
In mathematics and physics, several types of Abelian networks have been investigated, an archetypal example being \emph{sandpile models}~\cite{JaraiSandpile}. 
%
The study of sandpile models was initiated in~\cite{BTW1,BTW2} motivated by the observation that they present characteristics of \emph{self-organized criticality}.
This means that as the process evolves, the system drives itself to a ``critical state'' without having to tune any parameter. 
Here, ``critical state'' means that after a long time the configuration shows characteristics that are 
common to systems at criticality. 
Refer to~\cite{JaraiSandpile} for more information about self-organized criticality and sandpile models. 

There have been several works in the physics literature to understand self-organized criticality.  
One approach has been to relate this phenomenon to the more  classical one of phase transitions, called \emph{aborbing-state phase transition} \cite{MDPVZ}. 
This corresponds to a phase transition between a regime of fixation (where for a small density of particles 
the system moves towards an absorbing state) and a regime of activity (where for a large density of particles the activity is sustained indefinitely).
Physicists believe that the presence of an \emph{absorbing-state phase transition} is intrinsically connected 
to the phenomenon of \emph{self-organized criticality}~\cite{MDPVZ}, and even defines a new universality class~\cite{RPV}. 
In particular, physicists studied several systems with a conserved number of particles which are connected to systems from self-organized criticality, and showed non-rigorously that such systems undergo 
an absorbing-state phase transition. Examples of such systems include 
stochastic sandpiles, fixed energy sandpiles, conserved threshold transfer processes, and activated random walks~\cite{MDPVZ,RPV,PV}.

In the mathematics literature, results in this area are much more scarce. Ingenious proofs have been developed to show that stochastic sandpiles and activated random walks
undergo an absorbing-state phase transition in some graphs~\cite{RollaSidoravicius,SidoraviciusTeixeira,StaufferTaggi,BGH,Taggi}, and it is expected that such a result should be true for any vertex-transitive graph.
In this paper, we show that the same is not true for the oil and water model, for any vertex-transitive graph. 
In some sense, the strong interactions between the particles in the oil and water dynamics cause the particles to organize themselves in order to achieve fixation.
To the best of our knowledge, this is the first time that a natural model of an Abelian network (with a conserved number of particles) is shown not to undergo an absorbing-state phase transition.
Another additional feature of our result is that our proof is not egineered for a specific graph, 
but works in any vertex transitive graph and any initial configuration of particles that is obtained from a 
product measure.

\subsection{Proof overview}\label{sec:overview}
Two fundamental properties that will be heavily employed in the proof are the \emph{Abelian property} and the \emph{0-1 law}.
A popular strategy to analyze Abelian networks~\cite{RollaSidoravicius,SidoraviciusTeixeira,BGH,CandelleroGangulyHoffmanLevine}
is to devise a so-called \emph{stabilization algorithm}. For example, if one wants to show fixation (resp., activity), this strategy consists 
of choosing a smart order to fire the vertices, exploiting the Abelian property, in order to obtain that a given vertex \emph{does not fire at all} (resp., fires infinitely many times) with positive probability, 
which by the 0-1 law implies almost surely fixation (resp., activity). 
Usually, the stabilization algorithm exploits the structure of the graph (which, in all the aforementioned papers, was always a grid such as $\mathbb{Z}^d, \, d \geq 1$), making such proofs very much graph dependent. 
Moreover, in some models, such as stochastic sandpiles and activated random walks, where an absorbing-state phase transition takes place, one also uses \emph{monotonicity}; that is, 
it suffices to show fixation for some small enough $\mu$, and to show activity for some large enough $\mu$. 

The oil and water model gives rise to different challenges, since we need to show that the process fixates for \emph{all} $\mu$, no matter how large it may be, and for \emph{all} transitive graphs. 
In order to do this, we had to develop a new proof strategy.
Before describing it, we fix some terminology.
For any vertex $x$, if $x$ has $k_o$ oils and $k_w$ waters, 
we say that $x$ has $k_o \land k_w$ oil-water pairs, where we view each such pair as a matching between an oil particle and a water particle from $x$.
So, each vertex $x$ may only have unpaired particles of at most one type (either oil or water). 

Now suppose that vertex $x\in V(G)$ is unstable, thus $x$ has at least one oil-water pair.
If all neighbors of $x$ have nonzero unpaired oils, when we fire $x$, the water particle that gets to jump from $x$ will be paired to one of the unpaired oils located at the neighbors of $x$ (or to the 
oil particle that jumped from $x$, if both oil and water jump to the same neighbor). 
As a consequence, the number of oil-water pairs in the system does not change. In fact, 
even if the water jumping from $x$ gets paired to a different oil particle, 
we observe that the firing of $x$ effectively causes an oil-water pair to do a step of a \emph{simple random walk} from $x$. 
The same occurs if all neighbors of $x$ have nonzero unpaired waters. 

On the other hand, suppose that $d_w\geq 1$ neighbors of $x$ have unpaired waters, 
$d_o\geq 1$ neighbors of $x$ have unpaired oils, and that
$d_w + d_o = \d$ with $\d$ denoting the degree of each vertex of $G$ 
(that is, each neighbor of $x$ has at least one unpaired particle).
In this case, the number of oil-water pairs changes by either -1, 0 or 1. For example, it changes by $-1$ (resp., $+1$) 
if the water jumps from $x$ to a neighbor with unpaired waters (resp., oils), and the oil jumps from $x$ to a neighbor with unpaired oils (resp., 
waters); in other cases the number of oil-water pairs does not change. We can readily see that 
\[
\text{the number of oil-water pairs changes }
\left \{
\begin{split}
& \text{by $0$ with probability }=1-2\frac{d_w d_o}{\d^2},\\
& \text{by $1$ with probability }=\frac{d_w d_o}{\d^2},\\
& \text{by $-1$ with probability }= \frac{d_w d_o}{\d^2}.
\end{split}
\right .
\]
The above gives that, in this case, the configuration of oil-water pairs behaves as a \emph{critical branching random walk} on $G$.
Suppose now that $x$ has at least one neighbor with no unpaired particles (such neighbors are called holes), then
we have that the configuration of oil-water pairs behaves as a \emph{subcritical} 
branching random walk. 

Putting all these cases together, when a vertex $x$ fires, 
the configuration of oil-water pairs behaves either as a simple random walk, as a critical branching random walk, or as a subcritical branching random walk, 
depending on the environment of unpaired particles at the neighbors of $x$. Moreover, it behaves as a subcritical branching random walk only when $x$ is the neighbor of a hole.

Intuitively, since oil-water pairs cause a vertex to fire, in order to show fixation we need to show that the number of oil-water pairs decreases quickly. 
Thus, we want to show that for a large enough number of steps we fire a vertex that neighbors a hole.

The proof works by contradiction. We assume that the system is active, which implies that each vertex fires a very large number of times.
Now consider a vertex $x$ that fires $k$ times, and let $y$ be a neighbor of $x$ which, for instance, has unpaired oils. 
Then, we can show that $y$ will be a hole for a number of times that increases with $k$. 
This is because each time $x$ fires, conditioning on $x$ sending exactly one particle to $y$, with equal probability this particle is an oil or a water. 
So the number of unpaired particles at $y$ behaves as a simple random walk on $\mathbb{N}$, reflected at the origin, which is recurrent.
Developing this argument we will obtain that a very large number of holes will be created during this process. At those times, the number of oil-water pairs behaves as a supermartingale. 
Hence, it decreases quickly.

In order to implement this strategy, we need to control the evolution of the locations of the oil-water pairs. The challenge is that  
they behave as a mix of simple random walk, critical branching random walk and subcritical branching random walk, 
depending on (and affecting) the environment of the unpaired particles.
We are able to control this by defining a suitable martingale, which depends on the configuration of the particles. This martingale allows us to relate 
the expected number of oil-water pairs to the Green's function of simple random walk on $G$. This step, which is at the core of our proof, is given in Lemma~\ref{lemma:BRW}; see also Remark~\ref{Remark:supermartingale}.

\section{Graphical representation and properties}
\label{sect:graphical}
In this section we introduce a graphical representation
for the model. 
Via this representation we can prove a 0-1 law for the probability of fixation, and the  Abelian property, where the latter informally states that the number of firings 
at a given vertex does not depend on the temporal order of firings of the system and was proved in \cite{BondLevine}.
The structure of this section is inspired by \cite{RollaSidoravicius},
where the authors prove a 0-1 law for two models which are strictly related to the present one, namely \emph{stochastic sandpiles} and  \emph{activated random walks}.

\paragraph{Notation.}
 The graph $G$ is infinite, vertex-transitive with finite degree, and it is fixed along the whole proof.
We fix an arbitrary reference vertex and call it \textit{origin} $ o \in V(G)$.
When considering two vertices $x,y \in V(G)$, we denote by $d(x,y)$ the graph distance between $x$ and $y$, namely the length of the shortest path from $x$ to $y$. 
As a shorthand we also write $x \sim y$ when $d(x,y)=1$.


\subsection{Definitions}
%
%

The space of possible configurations will be denoted by $\Omega := \mathbb{N}^{V(G)} \times \mathbb{N}^{V(G)}$.
We shall denote an element of $\Omega$ by
\[
 \eta = \big ( \eta^o(x), \eta^w(x)   \big )_{x \in V(G)},
\]
where $\eta^o(x)$  (resp., $\eta^w(x)$) corresponds to the number of oils (resp., waters) at $x$.
Also, recall that $\mu>0$ is the expected number of particles at each site in the starting configuration, that is
\[
\mu=\E  \big ( \,  \eta^o(o) + \eta^w(o) \, \big ),
\]
where $o \in V(G)$ denotes a reference vertex that we call the origin.
When investigating the long-time behavior of this model we might expect two possible outcomes, which can depend on $\mu$ and on the properties of the graph $G$, namely \emph{fixation} or \emph{activity}, which we describe below.
For all $x\in V(G)$ and all $t\geq 0$, let $u_t(x) $ denote the number of firings occurred at $x$ by time $t$; we say that the process \emph{fixates} when for any finite set of vertices $A\subset V(G)$ there is a (random) time $\tau_A <\infty $ for which
\[
\forall x\in A, \text{ for all }t>\tau_A \quad u_t(x)= u_{\tau_A}(x).
\]
In other words, no vertex of $A$ fires after time $\tau_A$.
On the other hand, we say that the process is \emph{active} if it does not fixate.

Given a configuration $\eta \in \Omega$, a vertex $x \in {V(G)}$ is called \textit{stable} if $\eta^o(x) \wedge \eta^w(x) = 0$ and it is called  \textit{unstable} otherwise.

For any $x \in {V(G)}$ and any pair of vertices $y_o, y_w \sim x$, we define a pair of instructions $(\tau^o_{x,y_o}, \tau^w_{x,y_w})$ as an operator acting on configurations $\eta = (\eta^o, \eta^w) \in \Omega$ which are unstable at $x$.
Given such a configuration $\eta$ as input, the operator returns a configuration $\eta_1 = (\eta^o_1, \eta^w_1) \in \Omega$ such that,
for $q \in \{o, w\}$,
$$
\eta^q_1(z) :  = 
\left \{
\begin{array}{ll}
\eta^q(z) - 1 & \mbox{ if $z = x$,} \\
\eta^q(z) +1 & \mbox{ if $z = y_q$,} \\
\eta^q(z)  & \mbox{ otherwise}.
\end{array}
\right .
$$
In words, the operator  $(\tau^o_{x,y_o}, \tau^w_{x,y_w})$ makes one oil  jump from $x$ to $y_o$ and one water  jump from $x$ to $y_w$.

Now we fix an \textit{array} $\tau = \{ \tau^{x,j} : \, x \in V(G), \, j \in \mathbb{N}\}$, where each element $\tau^{x,j}$ is a \textit{pair of instructions} of the form $\tau^{x,j} =  (\tau^{x,j, o},\tau^{x,j, w})$; in particular, each such a pair is an element of the set $\{ (\tau^o_{x,y_o}, \tau^w_{x,y_w}) \ : \ y_o \sim x, y_w \sim x \}$. 

We also need to define a function $\mathbf{h} = ( h(x)\, : \,  x \in V(G))$ that counts the number of pairs of instructions used at each vertex. 
Given the counter $\mathbf{h} $, we say that $x$ \textit{fires} (or that we \textit{topple} $x$, borrowing the notation from the abelian sandpiles setting) when we act on the pair $(\eta, \mathbf{h} )$ through an operator $\Phi_x$ which is defined as,
\begin{equation}
\label{eq:Phioperator}
\Phi_x ( \eta, \mathbf{h} ) =
( \tau^{x, h(x) + 1}  \, \eta, \, \mathbf{h}  + \delta_x),
\end{equation}
where $\delta_x(y)=1$ if $y=x$ and $\delta_x(y)=0$  otherwise.
In words, the operator $\Phi_x$ makes one oil and one water  jump from $x$ simultaneously and then it updates the counter $\mathbf{h} $.
The operation $\Phi_x$ is said to be \textit{legal} for $(\eta, \mathbf{h} )$ if $x$ is unstable in $\eta$, otherwise it is \textit{illegal}.

\subsection{Properties}

We now describe the properties of this representation.
For a sequence of vertices $\alpha = ( x_1, x_2, \ldots , x_k)$,
we write $\Phi_{\alpha} = \Phi_{x_k} \Phi_{x_{k-1}}
\ldots \Phi_{x_1}$ and we say that $\Phi_{\alpha}$ is
\textit{legal} for $\eta$ if $\Phi_{x_\ell}$
is legal for $\Phi_{(x_{\ell-1}, \ldots, x_1)} (\eta, \boldsymbol{0}) $
for all $\ell \in \{  2, \ldots , k \}$,
where $\boldsymbol{0}$ is the counter which equals zero at every vertex.
Given a particle configuration $\eta \in \Omega$, a legal sequence $\alpha$ and a fixed array of instructions $\tau$, we write $\Phi_{\alpha} \eta \in \Omega$ for the particle configuration of the pair $\Phi_{\alpha} (\eta, \boldsymbol{0}).$ 
In other words, $\Phi_{\alpha} \eta$  is the particle
configuration which is obtained from $\eta$ 
when we topple the vertices according to  the sequence 
$\alpha$.
Let $m_{\alpha} = \{ m_{\alpha}(x) \, : \,x \in  V(G) \}$
be given by
\begin{equation}\label{eq:m-alpha}
m_{\alpha}(x) \, = \, \sum_{\ell} \mathbbm{1}\{x_\ell = x\}, 
\end{equation}
that is the number of times the vertex $x$ appears in the firing sequence $\alpha$.

We write $m_{\alpha} \geq m_{\beta}$ if
$m_{\alpha} (x)  \,  \geq \, m_{\beta} (x)$ for all  $x \in {V(G)}$.
We write $\eta_1   \geq   \eta_2$ if $\eta^q_1 (x) \, \geq \, \eta^q_2(x)$ for  $q \in \{o, w\}$ and $x \in {V(G)}$. We also write $(\eta', \mathbf{h} ') \geq (\eta, \mathbf{h} )$
if $\eta' \geq \eta$ and $\mathbf{h} ' \geq \mathbf{h} $.

Let $\eta, \eta'$ be two configurations, let $x\in V(G)$, let  $\tau$ be an array of instructions, and let $K$ be a finite subset of ${V(G)}$. A configuration $\eta$ is said to be \textit{stable} in $K$
if all the vertices $x \in K$ are stable. We say that a sequence $\alpha$ is contained in $K$
if all its elements are in $K$, and we say that $\alpha$ \textit{stabilizes} $\eta$ in $K$
if $\Phi_\alpha \eta$ is stable in $K$.
The following property was proved by Bond and Levine.

\begin{Lemma}[Abelian Property, \cite{BondLevine}]\label{lemma:AbelianProp}
   Let $K\subset {V(G)}$ be a finite set.
   If $\alpha$ and $\beta$ are both legal sequences for $\eta$
   that are contained in $K$ and stabilize $\eta$ in $K$, 
   then $m_{\alpha} = m_{\beta}$ and $\Phi_{\alpha} \eta = \Phi_{\beta} \eta$.
\end{Lemma}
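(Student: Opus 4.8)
The plan is to split the statement into two independent facts: first, that the final configuration is determined by the firing counts, i.e.\ $m_\alpha=m_\beta$ implies $\Phi_\alpha\eta=\Phi_\beta\eta$; and second, that two legal sequences that both stabilize $\eta$ in $K$ necessarily have the same counts, $m_\alpha=m_\beta$. For the first fact, the point is that by \eqref{eq:Phioperator} the instruction applied at the $j$-th firing of a vertex $x$ is always $\tau^{x,j}$ — it depends only on how many times $x$ has fired before, never on the global order — because the counter $\mathbf{h}$ is incremented exactly when $x$ fires. Writing $\Delta^{x,j}\in(\mathbb{Z}^{V(G)})^2$ for the signed change to the configuration caused by the instruction pair $\tau^{x,j}$ (subtract $\delta_x$ from both layers, add $\delta$ at the oil-target and at the water-target neighbour of $x$), an immediate induction on the length of $\sigma$ gives, for every legal sequence $\sigma$,
\begin{equation*}
\Phi_\sigma\eta \;=\; \eta \;+\; \sum_{x\in V(G)}\ \sum_{j=1}^{m_\sigma(x)}\Delta^{x,j}.
\end{equation*}
The right-hand side depends on $\sigma$ only through $m_\sigma$, which is the first fact. (Nothing about commuting operators is needed beyond commutativity of addition in $\mathbb{Z}$; legality of $\sigma$ is used only to guarantee that every partial sum already lies in $\Omega$.)

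For the second fact I would prove a \emph{least action principle}: if $\beta$ is \emph{any} legal sequence for $\eta$ all of whose vertices lie in $K$, and $\alpha$ is a legal sequence that stabilizes $\eta$ in $K$, then $m_\beta\le m_\alpha$ pointwise. I would argue by induction on the length of $\beta$, the empty sequence being trivial. Write $\beta=(\gamma,x)$ where $\gamma$ consists of all but the last vertex; then $\gamma$ is legal, $x$ is unstable in $\Phi_\gamma\eta$ because $\beta$ is legal, and $x\in K$. By the inductive hypothesis $m_\gamma\le m_\alpha$, so it remains to show $m_\gamma(x)<m_\alpha(x)$. Suppose not, i.e.\ $m_\gamma(x)=m_\alpha(x)$. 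Subtracting the displayed formula for $\sigma=\gamma$ from the one for $\sigma=\alpha$,
\begin{equation*}
(\Phi_\alpha\eta)(x)-(\Phi_\gamma\eta)(x)\;=\;\sum_{y\neq x}\ \sum_{j=m_\gamma(y)+1}^{m_\alpha(y)}\Delta^{y,j}(x),
\end{equation*}
where the $y=x$ terms drop out because $m_\gamma(x)=m_\alpha(x)$. For $y\neq x$ a firing of $y$ can only \emph{deposit} particles at $x$, so $\Delta^{y,j}(x)$ is nonnegative in both the oil and the water coordinate; hence $(\Phi_\alpha\eta)^o(x)\ge(\Phi_\gamma\eta)^o(x)\ge 1$ and $(\Phi_\alpha\eta)^w(x)\ge(\Phi_\gamma\eta)^w(x)\ge 1$, so $x$ is unstable in $\Phi_\alpha\eta$. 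Since $x\in K$, this contradicts the assumption that $\alpha$ stabilizes $\eta$ in $K$. Therefore $m_\beta=m_\gamma+\delta_x\le m_\alpha$, completing the induction. Applying the principle twice, swapping the roles of $\alpha$ and $\beta$ (both are legal, contained in $K$, and stabilize $\eta$ in $K$), yields $m_\beta\le m_\alpha$ and $m_\alpha\le m_\beta$, hence $m_\alpha=m_\beta$; the first fact then gives $\Phi_\alpha\eta=\Phi_\beta\eta$.

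The step I expect to need the most care is the least action principle, and in particular the monotonicity used inside it: that firing \emph{any other} vertex changes the configuration at $x$ by a coordinatewise nonnegative amount \emph{in both layers at once}. This is exactly what makes ``$x$ stays unstable after extra firings elsewhere'' work, and it is a genuine feature of the oil--water rule, where a firing removes one oil and one water from the firing site and elsewhere only adds particles. The remainder is bookkeeping: that every prefix of a legal sequence is legal, that the displayed sum formula is valid over $(\mathbb{Z}^{V(G)})^2$ even though only legal intermediate configurations are known a priori to be nonnegative, and the trivial base case of the induction.
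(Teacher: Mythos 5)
Your argument is correct, but note that the paper does not actually prove this lemma: it is quoted from Bond and Levine, who establish abelianness for general abelian networks by an abstract local-confluence (exchange/strong convergence) argument showing that any two legal executions can be rearranged into one another. What you give instead is a self-contained, elementary proof in the Diaconis--Fulton style via a least action principle, specialized to oil and water. The two facts you isolate are exactly the right ones: (i) because the $j$-th firing of $x$ always consumes the instruction pair $\tau^{x,j}$ (the counter $\mathbf h$ in \eqref{eq:Phioperator} is local to $x$), the net change of the configuration is an order-independent sum over $\{(x,j): j\le m_\sigma(x)\}$ in $(\mathbb{Z}^{V(G)})^2$; and (ii) the monotonicity that drives the least action principle, namely that a firing at $y\neq x$ can only add to the oil count and to the water count at $x$, so instability at $x$ survives any extra legal firings elsewhere. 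Point (ii) is precisely the structural feature that makes oil and water an abelian network in the Bond--Levine sense, so your proof makes explicit, in this special case, what their general machinery encodes abstractly; it buys a shorter, model-specific argument and in addition yields the least action inequality $m_\beta\le m_\alpha$, which is stronger than the equality stated in the lemma and is of the same nature as the monotonicity used in Lemma~\ref{lemma:monotonicity}. The only points requiring care --- that every prefix of a legal sequence is legal, that the telescoping identity for $\Phi_\alpha\eta-\Phi_\gamma\eta$ uses the inductive hypothesis $m_\gamma\le m_\alpha$ to make the summation ranges nonnegative, and that the contradiction needs $x\in K$ --- are all handled correctly in your write-up.
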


For any finite subset $K\subset {V(G)}$, any $x\in {V(G)}$, any particle configuration $\eta$, and any array of instructions $\tau$, we denote by $m_{K,\eta,\tau}(x)$ the number of times that $x$ fires in the stabilization of $K$ starting from  $\eta$ and using the instructions in $\tau$.
Note that by Lemma~\ref{lemma:AbelianProp}, we have that $m_{K,\eta,\tau}$  is well defined.
The following fact is a direct consequence of the Abelian property.

\begin{Lemma}[Monotonicity]\label{lemma:monotonicity}
 For finite subsets  $K \subset K' \subset {V(G)}$ and particle configurations $\eta \leq \eta'$, we have that,  
$$
m_{K, \eta, \tau} \leq m_{K', \eta', \tau}.
   $$   
\end{Lemma}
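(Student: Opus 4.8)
The plan is to deduce the inequality directly from the Abelian property (Lemma~\ref{lemma:AbelianProp}), through the classical argument comparing legal firing sequences. The key preliminary step is a \emph{domination principle}: if $\alpha=(x_1,\dots,x_k)$ is legal for $\eta$ and contained in $K$, then $\alpha$ is also legal for $\eta'$ and contained in $K'$ (the latter because $K\subseteq K'$), and moreover at every stage $0\le\ell\le k$ one has $\Phi_{(x_1,\dots,x_\ell)}\eta\le\Phi_{(x_1,\dots,x_\ell)}\eta'$, the associated counters coinciding. I would prove this by induction on $\ell$: if $\zeta\le\zeta'$ with the same counter $\mathbf{h}$, and $x_{\ell+1}$ is unstable in $\zeta$ (so $\zeta^o(x_{\ell+1})\wedge\zeta^w(x_{\ell+1})\ge 1$), then it is unstable in $\zeta'$ as well since $\zeta'\ge\zeta$; and firing $x_{\ell+1}$ in both configurations invokes the \emph{same} instruction $\tau^{x_{\ell+1},\,h(x_{\ell+1})+1}$ (the counters agree), which merely subtracts one particle at $x_{\ell+1}$ and adds one particle at each of the two target neighbours in the respective coordinate, so $\zeta\le\zeta'$ and the equality of counters persist and the new firing is legal for $\zeta'$.

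Granting this, the lemma follows quickly. Suppose first that the stabilization of $\eta$ in $K$ terminates, and pick a legal sequence $\alpha$ for $\eta$, contained in $K$, that stabilizes $\eta$ in $K$, so $m_\alpha=m_{K,\eta,\tau}$. By the domination principle $\alpha$ is legal for $\eta'$ and contained in $K'$. If $\Phi_\alpha\eta'$ is not already stable in $K'$ we may legally fire an unstable vertex of $K'$, and (by the Abelian property) we can keep doing so until we reach a legal sequence $\alpha\gamma$ for $\eta'$, contained in $K'$, that stabilizes $\eta'$ in $K'$. Then Lemma~\ref{lemma:AbelianProp} gives $m_{\alpha\gamma}=m_{K',\eta',\tau}$, and since $\gamma$ only adds firings, $m_{K',\eta',\tau}=m_{\alpha\gamma}\ge m_\alpha=m_{K,\eta,\tau}$, which is the desired inequality.

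The one point needing care — which I expect to be the main, though minor, obstacle — is the degenerate case in which the stabilization of a finite set does not terminate (this can happen for adversarial fixed instruction arrays, e.g.\ two adjacent interior vertices bouncing a single oil--water pair back and forth), so that $m_{K,\eta,\tau}$ or $m_{K',\eta',\tau}$ may take the value $+\infty$ at some vertices. This is harmless once $m_{K,\eta,\tau}(x)$ is read as the supremum of $m_\beta(x)$ over legal sequences $\beta$ for $\eta$ contained in $K$: by the domination principle every such $\beta$ is also a legal sequence for $\eta'$ contained in $K'$, so $m_{K,\eta,\tau}(x)\le m_{K',\eta',\tau}(x)$ at once, regardless of whether either supremum is finite. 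An alternative route that avoids the extension step altogether is a direct induction on the length of $\alpha$, comparing $\Phi_{(x_1,\dots,x_\ell)}\eta$ with a stable configuration $\Phi_\beta\eta'$ coordinatewise at the vertex to be fired next, using the explicit expression of a configuration in terms of the initial data, the odometer, and the instructions in $\tau$.
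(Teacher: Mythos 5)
Your proof is correct and follows essentially the same route as the paper's: take a legal sequence that stabilizes $\eta$ in $K$, observe that it remains legal (and extendable to a stabilizing sequence) for $\eta'$ in $K'$, and invoke the Abelian property of Lemma~\ref{lemma:AbelianProp} to compare odometers. Your explicit domination principle for $\eta\le\eta'$ and your handling of possibly non-terminating stabilizations are details that the paper's very terse proof leaves implicit, but they do not change the underlying argument.
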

\begin{proof}
Fix an array $\tau$, and let $\beta:=(x_1, x_2, \ldots, x_k)$ be a legal sequence that stabilizes $\eta$ in $K$; then, by Lemma \ref{lemma:AbelianProp} we have that any other legal sequence stabilizing $K$ will use the same number of firings as $\beta$.
By definition, this sequence has not yet stabilized any vertex in the set $K'\setminus K$.
Since the set $K'$ cannot be stable until the set $K$ is stable, the claim follows from \eqref{eq:m-alpha}.
%
\end{proof}
By monotonicity, given any growing sequence of subsets $V_1\subseteq V_2 \subseteq \cdots \subseteq {V(G)}$ such that $\lim_{t\to\infty} V_t={V(G)}$, 
the limit
\begin{equation}\label{eq:lim_Vt}
   m_{\eta, \tau} := \lim\limits_{t\to \infty} m_{V_t, \eta, \tau}
\end{equation}
exists and does not depend on the particular sequence $\{V_t\}_t$.

So far we have fixed a deterministic array $\tau$ and a particle configuration $\eta$. We now introduce a probability measure on the space of instructions and particle configurations.
We denote by $\mathcal{P}$ the probability measure according to which the pairs of instructions
$\tau^{x,j} := (\tau^{x,j,o}, \tau^{x,j,w})$ are independent across different values of $x$, $j$ and $\{o,w\}$, and by $\d_x$ the degree of vertex $x\in V(G)$.
Moreover, the two elements $\tau^{x,j,o}$ and $ \tau^{x,j,w}$ are independent and have distribution
\[
 \mathcal{P} \big ( 
\tau^{x,j, q} =  \tau^q_{x, y_q} \big   )
 := \frac{1}{{\d_x}},
\]
for any  $y_q \sim x$, $q \in \{o,w\}$.
Roughly speaking, under the measure $\mathcal{P}$ the instructions  induce any particle that uses them to perform a step of independent simple random walk.

Finally, we denote by $\mathcal{P}_\nu=\mathcal{P}\otimes \nu$ the joint law of
$\eta$ and $\tau$.
We shall often omit the dependence on $\nu$ by writing $\mathcal{P}$ instead of $\mathcal{P}_\nu$.
The following lemma relates the dynamics of the oil-water model to the stability property of the representation.
Recall that $   \mathbb{P}_{\nu}$ denotes the law of the oil-water dynamics under the assumption that the initial configuration was distributed according to a product of measures $\nu$.
\begin{Lemma}[0-1 law]
   \label{lemma:01law}
Let $m_{\eta, \tau}$ be as in \eqref{eq:lim_Vt}.
   Then 
   \begin{equation}\label{eq:01law}
   \mathbb{P}_{\nu}  (\text{oil and water fixates} ) = \mathcal{P}_{\nu} ( m_{\eta, \tau} (o) < \infty ) \in \{0, 1 \}.
   \end{equation}
\end{Lemma}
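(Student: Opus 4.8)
The plan is to prove the two assertions in \eqref{eq:01law} separately: first the identity $\mathbb{P}_{\nu}(\text{oil and water fixates}) = \mathcal{P}_{\nu}(m_{\eta,\tau}(o)<\infty)$, and then the dichotomy $\mathcal{P}_{\nu}(m_{\eta,\tau}(o)<\infty)\in\{0,1\}$. For the first identity, the key observation is that the continuous-time dynamics defined via Poisson clocks is a particular legal firing schedule: whenever the clock at an unstable vertex $x$ rings, we apply $\Phi_x$ using the next unused pair of instructions at $x$, and the values of the clock rings only determine the \emph{order} of firings, not their effect. By the Abelian property (Lemma~\ref{lemma:AbelianProp}) and the existence of the limit \eqref{eq:lim_Vt} established via Lemma~\ref{lemma:monotonicity}, the total number of times a vertex $x$ fires in the dynamics equals $m_{\eta,\tau}(x)$, the limit of $m_{V_t,\eta,\tau}(x)$ along any exhaustion $\{V_t\}$ of $V(G)$. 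Thus fixation (no vertex of any finite set fires after a finite time) is equivalent to $m_{\eta,\tau}(x)<\infty$ for all $x$; since if one vertex fires infinitely often then, as remarked in the introduction, all do, this is in turn equivalent to $m_{\eta,\tau}(o)<\infty$. One must also check that the Poisson clocks almost surely produce infinitely many rings at every vertex and that no vertex is ``starved'' of firing opportunities, i.e.\ that the continuous-time process is well-defined up to any time; since the initial configuration has finite expected density and firings conserve the total number of particles locally in a controlled way, standard arguments give that only finitely many particles can ever reach a fixed vertex in finite time, so the process is well-defined, and the identification with $m_{\eta,\tau}$ goes through.

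For the 0-1 law, the strategy follows \cite{RollaSidoravicius}: the event $\{m_{\eta,\tau}(o)<\infty\}$ is measurable with respect to the product measure $\mathcal{P}_{\nu}$ on $(\eta,\tau)$, and we want to show it is trivial. The first step is to observe that the event $\{m_{\eta,\tau}(o)<\infty\}$ does not actually depend on $o$: by monotonicity and the structure of legal sequences, if $m_{\eta,\tau}(x)=\infty$ for some vertex $x$ then $m_{\eta,\tau}(y)=\infty$ for every vertex $y$ — indeed, a vertex firing infinitely often forces each of its neighbors to receive infinitely many particles and hence to become unstable and fire infinitely often too, and $G$ is connected. So $\{m_{\eta,\tau}(o)<\infty\}$ coincides with the event $\{m_{\eta,\tau}(x)<\infty \text{ for all } x\in V(G)\}$, call it $\mathcal{F}$ (the fixation event). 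The second step is to show $\mathcal{F}$ is invariant, up to $\mathcal{P}_{\nu}$-null sets, under the translations of $G$: since $G$ is vertex-transitive, for any $g$ in (a transitive subgroup of) $\mathrm{Aut}(G)$ the pushforward of $\mathcal{P}_{\nu}$ under the induced shift on $(\eta,\tau)$ is $\mathcal{P}_{\nu}$ itself (the initial configuration is i.i.d.\ and the instruction array is i.i.d.), and the fixation event is manifestly shift-invariant. The third step is ergodicity of $\mathcal{P}_{\nu}$ under this group action: because $\mathcal{P}_{\nu}$ is a product measure over a vertex-transitive graph, it is mixing, hence ergodic, under the translation action — this is where one invokes the Kolmogorov 0-1 law or, more precisely for infinite index, the standard fact that i.i.d.\ product measures are ergodic (indeed mixing) under any transitive group of permutations of the index set. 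Combining invariance and ergodicity yields $\mathcal{P}_{\nu}(\mathcal{F})\in\{0,1\}$, which is the claim.

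The main obstacle, and the point requiring the most care, is the first step: rigorously identifying the continuous-time Poisson-clock dynamics with the combinatorial object $m_{\eta,\tau}$. One has to verify that the continuous-time process is almost surely well-defined (no explosion, no issue with infinitely many vertices wanting to fire), that the number of firings at each vertex up to time $t$ is monotone in $t$ and converges, and that this limit equals $m_{\eta,\tau}$ obtained by stabilizing finite sets and passing to the limit. The comparison works because any finite initial segment of the continuous-time firing history is a legal sequence contained in some finite set $K$, and one can extend it to a legal sequence stabilizing $K$; Lemma~\ref{lemma:AbelianProp} then shows the firing counts agree with $m_{K,\eta,\tau}$ up to the firings inside $K$, and letting $K\uparrow V(G)$ gives the identification. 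The ergodicity input in the last step is standard, and the translation-invariance of the event is immediate; so the bulk of the work is the graphical-representation bookkeeping, which is precisely why this lemma is placed in the ``Graphical representation and properties'' section and modeled on \cite{RollaSidoravicius}.
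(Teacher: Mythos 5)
Your proposal follows essentially the same route as the paper: fixation is identified with $\{m_{\eta,\tau}(o)<\infty\}$ via the Abelian property and a truncation/limit-interchange argument borrowed from Rolla--Sidoravicius, and the $0$--$1$ dichotomy comes from the observation that one vertex firing infinitely often forces all vertices to do so, combined with invariance of the resulting event under graph automorphisms and ergodicity of the i.i.d.\ product measure under the transitive automorphism action. The only soft spot is the phrase ``any finite initial segment of the continuous-time firing history is a legal sequence contained in some finite set $K$'' --- in infinite volume infinitely many vertices fire in any positive time interval, so the paper instead compares the dynamics \emph{frozen outside} $B_M$ (the quantities $u_{t,M}$) with $m_{B_M,\eta,\tau}$ and then justifies interchanging the limits $t\to\infty$ and $M\to\infty$, which is exactly the technical content you correctly flag as the main obstacle.
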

Lemma \ref{lemma:01law} was proved in \cite{RollaSidoravicius} for two models which are related to oil and water, activated random walk and the stochastic sandpile model. Here we present  the main steps of the proof and we refer to \cite{RollaSidoravicius} for the complete argument.
\begin{proof}[Sketch of the proof of Lemma \ref{lemma:01law}]
The 0-1 law,
\begin{equation}\label{eq:01first}
 \mathcal{P}_{\nu} ( m_{\eta, \tau} (o) < \infty ) \in \{0, 1 \},
\end{equation}
follows from the following fact. 
Conditional on a given initial configuration $\eta$, by connectivity of $G$ and irreducibility of simple random walk, it follows that if $m_{\eta, \tau}(x) = \infty$, then $m_{\eta, \tau}(y)= \infty$ for all $y \in V(G)$.
Since $\mathcal{P}_\nu$ is a product measure and the event $\{m_{\eta, \tau}(y)= \infty$ for all $y \in V(G)\}$ is invariant with respect to any graph automorphism, we deduce  (\ref{eq:01first}).

We now sketch the proof of the identity in (\ref{eq:01law}). 
The proof consists in coupling the quantities $\lim_{t \rightarrow \infty} u_t(x)$ and $m_{\eta, \tau}(x)$.
More precisely, let   $u_{t, M}(x)$ denote the number of firings that occurred at $x$ before time $t$ when no particle is allowed to jump from vertices outside $B_M$, the ball of radius $M$ centered at $o$ (the firings at such vertices are ``frozen'').
The proof consists in two main steps. 

In the first step, one constructs a natural coupling between the variables $m_{B_M, \eta, \tau}(x)$ and $u_{\infty, M}(o) := \lim_{t \rightarrow \infty} u_{t, M}(o)$ as follows. 
Recall that $\mathcal{P}_{\nu}$ is the joint law of the variables  $\eta \in \Omega$ and $\tau$, under which they are independent.
On the other hand, $\mathbb{P}_{\nu}$ is the law of the oil and water dynamics, given by $\mathcal{P}_{\nu}$ together with the law of the sequence of  random variables $\boldsymbol{t} = \{t_{i,x}\}_{i \in \mathbb{N}, x \in V(G)}$, where $\{t_{i,x}\}_{i \in \mathbb{N}}$ are i.i.d.\ exponential random variables with rate $1$, and the sequences $\{t_{i,x}\}_{i \in \mathbb{N}}$ are independent across $x$. 
The elements of the sequence $\{t_{i,x}\}_{i \in \mathbb{N}}$ represent the times between consecutive attempts for firing $x$.
When such an attempt happens, if $x$ is unstable, one oil and one water perform a simple random walk step from $x$ using the next couple of instructions at $x$ of the array $\tau$.
Thus, by this construction, the random variable $u_{t, M}(x)$ is a deterministic function of the random variables $\boldsymbol{t}$, $\eta$ and   $\tau$. 
Since $u_{t, M}(x)$ is a monotone function in $t$ for every $x$ and every $M$ fixed, the limit
$
u_{\infty, M}(x)  = \lim_{t \rightarrow \infty} u_{t, M}(x)
$
exists.
Now we observe that on a finite set the system fixates within an almost surely finite time and by  Lemma \ref{lemma:AbelianProp}, $m_{B_M, \eta, \tau} (o)$ does not depend on the order according to which the instructions $\tau$ are used (provided that only legal instructions are used).
Thus, we deduce from this construction that,
\begin{equation}\label{eq:coupling}
\forall r, M \in \mathbb{N},  \quad \mathbb{P}_{\nu} \big ( u_{\infty, M }(o) > r \big ) =
\mathcal{P}_{\nu} \big ( m_{B_M, \eta, \tau} (o) > r \big ).
\end{equation}
The second step of the proof consists in showing that the limits over $M \rightarrow \infty$ and $t \rightarrow \infty$ commute, i.e,
\begin{equation}\label{eq:fact1}
\forall r \in \mathbb{N}, \, \, \,  \, \, \, \, \, 
\mathbb{P}_{\nu} \big (  \lim\limits_{t \rightarrow \infty} \lim\limits_{M \rightarrow \infty} u_{t,M}(o)  > r \,  \big )   =  \mathbb{P}_{\nu} \big ( \,\lim\limits_{M \rightarrow \infty}  \lim\limits_{t \rightarrow \infty} u_{t, M }(o) > r \,  \big ),
 \end{equation}
and that a blow up does not occur in finite time, i.e,
\begin{equation}\label{eq:fact2}
 \forall t \in \mathbb{R}_{\geq 0}, \quad
 \lim\limits_{r \rightarrow \infty}\mathbb{P}_{\nu} \big ( \,  u_{t }(o) > r \,  \big )   = 0.
 \end{equation}
 Equations (\ref{eq:fact1}) and (\ref{eq:fact2})
 and the fact that, 
\begin{equation}\label{eq:fact3}
\forall t \in \mathbb{R}_{\geq 0},  \, \, 
\forall r \in \mathbb{N},\quad
\mathbb{P}_{\nu} \big (  u_t(o) > r  \big )  = 
\lim\limits_{M \rightarrow \infty} \mathbb{P}_{\nu} \big (  u_{t,M}(o) > r  \big ) 
\end{equation}
imply (\ref{eq:01law}).
The proof of (\ref{eq:fact2}) is standard and follows from the fact that, since the jump rates are bounded,  particles starting at an infinite distance from $o$ cannot reach the origin within finite time.
We refer to \cite{RollaSidoravicius} for the proof of 
(\ref{eq:fact1}) given (\ref{eq:coupling}) and of how the equality in (\ref{eq:01law}) follows from these statements.
\end{proof}
From now on, when this is not generating any confusion, we will write $m_{K}$ instead of $m_{K,\eta,\tau}$, and $m(x)$ instead of $m_{\eta, \tau}(x)$ in order to make the paper more readable.

%
%
%
%
%
%
%
%
%
%
%
%
%
%
%
%
%

\subsection{Green's function of simple random walk}

In this section we recall some classical facts concerning the simple random walk and we provide some definitions.
We let $X(t)$ denote a simple random walk in $G$, and $P_x$ denote its law when  $X(0)=x \in V(G)$.
We let $E_x$ denote the corresponding expectation. 
Given a set $Z \subset V(G)$ we define 
$
\tau_Z   := \inf\{ t\geq 0  \,  \,   : \,  \,  X(t) \in Z\}$ and 
$\tau^+_Z   := \inf\{ t >0 \, \, : \, \, X(t) \in Z\}.$
If $Z = \{y\}$, we write
$\tau_y$ and $\tau^+_y$ instead of $\tau_{Z}$
and $\tau_Z^+$.
For any $x,y \in V(G)$, we define the Green's function,
\[
G_Z(x,y):= E_x \left [ \sum\limits_{t=0}^{\tau_{Z^c}} \mathbbm{1}\{X(t) = y \} \right ],
\]
where $Z^c := V(G) \setminus Z$.
In words, $G_Z(x,y)$ denotes the expected number of visits to a vertex $y$ performed by a simple random walk started at $x$ and killed upon exiting the set $Z$.

Given a function $g : V(G) \rightarrow \mathbb{R}$, 
$g = (g_x)_{x \in V(G)}$, we let  $\bigtriangleup g : V(G) \rightarrow \mathbb{R}$ denote the discrete Laplacian, that is, for every $x\in V(G)$ we set
$$
   (\bigtriangleup g)_x  := \frac{1}{\d_x}\sum_{y\sim x} (  g_y - g_x ),
$$ 
where we recall that $\d_x$ denotes the degree of $x$.
We say that $g$ is \textit{harmonic} in a set $K \subset V(G)$ if for any $x \in K$, $(\bigtriangleup g)_x = 0$.
The next proposition states some classical facts and its proof can be found, for example, in \cite[Chapter 2]{LyonsPeres}.
\begin{Proposition}\label{prop:classical facts}
Consider a finite set $K \subset V(G)$ and a vertex $y \in K$.
Let $g : V(G) \to \mathbb{R}$ be a function which is harmonic in $K \setminus \{y\}$ and such that $g_y=1$, $g_z=0$ for any $z \in K^c $. 
 Then the function  $g$ is unique  and satisfies
\begin{align}\label{eq:harmonicRW}
 g_w & = P_w( \tau_y < \tau_{K^c}), \quad \forall w \in K,  \\
 - ( \bigtriangleup g)_y    & = 1 - P_y ( \tau^+_y < \tau_{K^c}).
 \end{align}
 Moreover, for all $x,y\in K$ the Green's function satisfies
 \begin{align}
G_K(y,y)  &  =   \frac{1}{ 1 - P_y ( \tau^+_y < \tau_{K^c}) }, \label{eq:G-old} \\ 
G_K(x,y)  &  =  P_x ( \tau_y < \tau_{K^c}) \,  G_K(y,y). \label{eq:G=Ptau}
\end{align}
\end{Proposition}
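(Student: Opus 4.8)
The final statement to prove is Proposition~\ref{prop:classical facts}, which collects standard facts about the Green's function and harmonic functions for simple random walk. I will outline how I would establish each piece.

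\medskip

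\textbf{Plan.} The whole proposition follows from the optional stopping theorem applied to suitable martingales built from the random walk, together with the fact that on a finite set $K$ the exit time $\tau_{K^c}$ is almost surely finite (indeed has exponential tails), since $G$ is connected and each step has a uniformly positive chance of progressing toward $K^c$. I would organize the argument in three steps.

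\medskip

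\textbf{Step 1: uniqueness and the probabilistic representation \eqref{eq:harmonicRW}.} Given $g$ harmonic in $K\setminus\{y\}$ with $g_y=1$ and $g\equiv 0$ off $K$, consider the process $M_t := g_{X(t\wedge\sigma)}$ where $\sigma := \tau_y \wedge \tau_{K^c}$. Harmonicity of $g$ on $K\setminus\{y\}$ says exactly that $\E_w[g_{X(1)}] = g_w$ for $w \in K\setminus\{y\}$, so $(M_t)$ is a bounded martingale under $P_w$ for each $w\in K$. Since $\sigma < \infty$ $P_w$-a.s., optional stopping gives $g_w = \E_w[g_{X(\sigma)}] = P_w(\tau_y < \tau_{K^c})\cdot 1 + P_w(\tau_{K^c} < \tau_y)\cdot 0$, which is \eqref{eq:harmonicRW}. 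Uniqueness is immediate: any two such functions agree on $K$ by this formula and agree off $K$ by hypothesis. (One may note separately that the function $w\mapsto P_w(\tau_y<\tau_{K^c})$ genuinely is harmonic on $K\setminus\{y\}$ by a one-step/Markov-property conditioning, so a valid $g$ exists.)

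\medskip

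\textbf{Step 2: the Laplacian at $y$.} By definition $-(\bigtriangleup g)_y = g_y - \frac{1}{\d_y}\sum_{z\sim y} g_z = 1 - \frac{1}{\d_y}\sum_{z\sim y} P_z(\tau_y < \tau_{K^c})$, using Step 1 and $g_y = 1$. Conditioning on the first step of a walk started at $y$ and using the strong Markov property, $P_y(\tau_y^+ < \tau_{K^c}) = \frac{1}{\d_y}\sum_{z\sim y} P_z(\tau_y < \tau_{K^c})$ (for $z\notin K$ the term $P_z(\tau_y<\tau_{K^c})=0$, consistent with $g_z=0$). Substituting yields $-(\bigtriangleup g)_y = 1 - P_y(\tau_y^+ < \tau_{K^c})$.

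\medskip

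\textbf{Step 3: the Green's function identities \eqref{eq:G-old} and \eqref{eq:G=Ptau}.} For \eqref{eq:G-old}, decompose the walk from $y$ into excursions: each time the walk is at $y$ it returns to $y$ before leaving $K$ with probability $p := P_y(\tau_y^+ < \tau_{K^c})$, independently by the strong Markov property, so the number of visits to $y$ before $\tau_{K^c}$ is geometric, giving $G_K(y,y) = \sum_{n\ge 0} p^n = \tfrac{1}{1-p}$; note $p<1$ precisely because the walk exits $K$ a.s. For \eqref{eq:G=Ptau}, a walk from $x$ contributes to $G_K(x,y)$ only if it reaches $y$ before $\tau_{K^c}$, which happens with probability $P_x(\tau_y < \tau_{K^c})$; conditioned on that, by the strong Markov property at the first hitting time of $y$ the expected number of subsequent visits to $y$ (before exit) is exactly $G_K(y,y)$, so $G_K(x,y) = P_x(\tau_y<\tau_{K^c})\,G_K(y,y)$.

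\medskip

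\textbf{Expected main obstacle.} There is no real obstacle here — this is a textbook compilation (and indeed the authors cite \cite[Chapter 2]{LyonsPeres}). The only points requiring a line of care are: (i) justifying optional stopping, i.e.\ that $\tau_{K^c}<\infty$ a.s.\ with enough integrability, which is clear from the uniform geometric bound on the exit time for a connected finite $K$ in a locally finite graph; and (ii) keeping the bookkeeping straight between the ``$+$'' (return) stopping time $\tau_y^+$ and the plain hitting time $\tau_y$ when doing the first-step decompositions in Steps 2 and 3. Given the excerpt's purpose, I would most likely just state these identities with a one-paragraph indication of the excursion/optional-stopping argument and a reference, rather than writing all of Step 1--3 in full.
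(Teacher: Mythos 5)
Your proposal is correct. The paper gives no proof of Proposition~\ref{prop:classical facts} at all — it simply cites \cite[Chapter 2]{LyonsPeres} as these are classical facts — and your optional-stopping/first-step/excursion argument is precisely the standard proof that citation refers to, with the relevant care points (a.s.\ finiteness of $\tau_{K^c}$, the distinction between $\tau_y$ and $\tau_y^+$, and why $P_y(\tau_y^+<\tau_{K^c})<1$) correctly handled.
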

Now we proceed with the analysis of the function $G_{K}(y,x)$.
\begin{Lemma}\label{lemma:Green}
Suppose that we are given two sets $B$ and $Q$ such that $B \subset Q$, and $o\in Q$. 
Then,
\[
\sum_{y\in B}G_Q(y,o) = G_Q(o,o) \Bigl [\delta_{o\in B}+E_o \bigl [ \# \bigl \{t \leq \tau_{Q^c\cup \{o\}}^+ \ : \ X(t)\in B\setminus \{o\} \bigr \}\bigr ]\Bigr ],
\]
where
\[
\delta_{o\in B}:=
\left \{
\begin{array}{ll}
1 & \text{ if }o\in B;\\
0 & \text{ if }o\notin B.
\end{array}
\right .
\]
\end{Lemma}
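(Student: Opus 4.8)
The plan is to compute $\sum_{y\in B}G_Q(y,o)$ by a last-exit decomposition for the simple random walk, or equivalently by reversibility. The key identity to invoke is \eqref{eq:G=Ptau} from Proposition~\ref{prop:classical facts}, which on a vertex-transitive graph (where the walk is reversible with constant stationary measure, so $G_Q(y,o)=G_Q(o,y)$) gives $G_Q(y,o)=G_Q(o,y)=P_o(\tau_y<\tau_{Q^c})\,G_Q(y,y)$. However, the cleaner route — and the one that produces exactly the stated right-hand side — is to expand the Green's function directly: $\sum_{y\in B}G_Q(y,o) = \sum_{y\in B}G_Q(o,y) = E_o\bigl[\#\{t\le\tau_{Q^c}:X(t)\in B\}\bigr]$, so the whole statement reduces to proving
\[
E_o\bigl[\#\{t\le\tau_{Q^c}: X(t)\in B\}\bigr] = G_Q(o,o)\Bigl[\delta_{o\in B}+E_o\bigl[\#\{t\le\tau^+_{Q^c\cup\{o\}}: X(t)\in B\setminus\{o\}\}\bigr]\Bigr].
\]

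First I would set up the excursion decomposition of the walk started at $o$ and killed on exiting $Q$: between consecutive returns to $o$, the walk performs i.i.d.\ excursions (each either returning to $o$ before leaving $Q$, or leaving $Q$), and the number of excursions that return to $o$ before exiting is geometric, with $E_o[\text{number of visits to }o\text{ before }\tau_{Q^c}] = G_Q(o,o) = \tfrac{1}{1-P_o(\tau^+_o<\tau_{Q^c})}$ by \eqref{eq:G-old}. For any target set $S$, by Wald's identity (or a direct conditioning on the number of excursions) the expected number of visits to $S$ before $\tau_{Q^c}$ equals $G_Q(o,o)$ times the expected number of visits to $S$ during a single excursion, i.e.\ during $[0,\tau^+_{Q^c\cup\{o\}}]$ — but one must be careful about whether the initial position $t=0$ (which is at $o$) and the returns to $o$ are counted. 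Applying this with $S=B\setminus\{o\}$: a single excursion from $o$ (the time interval $t\in[1,\tau^+_{Q^c\cup\{o\}}]$, not counting the starting point which is $o\notin B\setminus\{o\}$) contributes $E_o[\#\{t\le\tau^+_{Q^c\cup\{o\}}:X(t)\in B\setminus\{o\}\}]$ in expectation per visit to $o$, giving the second term; and the visits to $o$ itself, which number $G_Q(o,o)$ in expectation, contribute to the count of visits to $B$ only when $o\in B$, giving the term $G_Q(o,o)\,\delta_{o\in B}$. Adding these yields the claimed formula.

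The main obstacle I anticipate is bookkeeping around the endpoints of excursions: the point $t=0$ is at $o$, each return to $o$ is the end of one excursion and the start of the next, and the stopping time $\tau^+_{Q^c\cup\{o\}}$ is defined with a strict inequality $t>0$ so that it does register the return to $o$. I would handle this by writing $\{t\le\tau_{Q^c}: X(t)\in B\} = \{\text{visits to }o\} \sqcup \{\text{visits to }B\setminus\{o\}\}$, treating the two pieces separately: the first piece has expected size $G_Q(o,o)$ if $o\in B$ and $0$ otherwise; for the second piece, partition the time axis at the successive visits to $o$, note each resulting block is a copy of an excursion that starts just after a visit to $o$ and ends at $\tau^+_{Q^c\cup\{o\}}$, and invoke the strong Markov property plus the fact that the number of such blocks has mean $G_Q(o,o)$ to factor the expectation. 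A cleaner alternative, if one wants to avoid excursion theory entirely, is the last-exit / reversibility computation: write $G_Q(y,o)=P_o(\tau_y<\tau_{Q^c})G_Q(y,y)$ and $G_Q(y,y)$ in terms of the return probability, but this tends to reintroduce the same endpoint subtleties, so I would go with the excursion argument.
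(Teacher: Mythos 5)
Your proof is correct and rests on the same two ingredients as the paper's: reversibility of the walk on a regular graph (so that $G_Q(y,o)=G_Q(o,y)$, used in the paper via \eqref{eq:G=Ptau} and \eqref{eq:G-old}) and the decomposition of the trajectory into excursions from $o$ killed at $Q^c\cup\{o\}$. The paper merely organizes the same computation vertex-by-vertex, writing $G_Q(y,o)=P_y(\tau_o<\tau_{Q^c})\,G_Q(o,o)$ and converting each hitting probability into the expected number of visits to $y$ during a single excursion, whereas you aggregate over $B$ first and invoke Wald; the endpoint bookkeeping you flag works out exactly as you describe, since $X(0)=o\notin B\setminus\{o\}$ and $X(\tau_{Q^c})\notin B$.
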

\begin{proof}
For all $y\in B$ by relation \eqref{eq:G=Ptau} we have
\[
G_Q(y,o)=P_y\bigl [ \tau_o<\tau_{Q^c}\bigr ]G_Q(o,o).
\]
Taking the sum over all $y\in B$, reversibility and \eqref{eq:G-old} lead to
\[
\begin{split}
\sum_{y\in B} G_Q(y,o)& =G_Q(o,o)\sum_{y\in B}P_y\bigl [ \tau_o<\tau_{Q^c}\bigr ]\\
&  = G_Q(o,o)\left [\delta_{o\in B}+\sum_{y\in B\setminus \{o\}}P_o\bigl [ \tau_y<\tau^+_{Q^c\cup \{o\}}\bigr ]\frac{1}{ 1 - P_y ( \tau^+_y < \tau_{Q^c\cup \{o\}}) }\right ].
\end{split}
\]
Clearly for all $y\neq o$, 
\[
P_o\bigl [ \tau_y<\tau_{Q^c\cup \{o\}}\bigr ]=E_o \bigl [ \mathbf{1}_{\tau_y<\tau_{Q^c\cup \{o\}}}\bigr ],
\]
and consequently, 
\[
\begin{split}
& G_Q(o,o) \left [\delta_{o\in B}+\sum_{y\in B\setminus \{o\}}  E_o \bigl [ \mathbf{1}_{\tau_y<\tau_{Q^c\cup \{o\}}}\bigr ]\frac{1}{ 1 - P_y ( \tau^+_y < \tau_{Q^c\cup \{o\}}) }\right ]\\
& = G_Q(o,o)\left [\delta_{o\in B}+\sum_{y\in B\setminus \{o\}}E_o \bigl [ \# \bigl \{t \leq \tau_{Q^c\cup \{o\}}^+ \ : \ X(t)=y \bigr \}\bigr ] \right ]\\
& = G_Q(o,o)\Bigl [\delta_{o\in B}+E_o \bigl [ \# \bigl \{t \leq \tau_{Q^c\cup \{o\}}^+ \ : \ X(t)\in B\setminus \{o\} \bigr \}\bigr ] \Bigr ],
\end{split}
\]
concluding the proof.
\end{proof}

\section{Diffusive fluctuations and number of visits}

Now we are able to introduce the following terminology.
\begin{Definition}\label{def:holes and unpaired}
Given a particle configuration $\eta = (\eta^o, \eta^w)$, let $\eta^o(x) \wedge \eta^w(x)$ be the number of \textit{pairs} at $x$, 
and $\eta^o(x) \vee \eta^w(x) - \eta^o(x) \wedge \eta^w(x)$ be the number of \textit{unpaired} particles at $x$. 
We say that $\eta$ has a \textit{hole} at $x$ if the number of oils and waters at $x$ is the same (or, equivalently, if the number of unpaired particles at $x$ is zero).
When we refer to a \textit{pair}, we always refer to two particles of different type.
\end{Definition}

This section is divided into two subsections.
In Section \ref{sect:number holes} we show that, if we assume that the system is active and we stabilize some arbitrarily chosen finite set $K$,  
then at any vertex $x\in K$ which is far enough from the boundary of $K$, we will observe a hole at $x$ \emph{many} times during the stabilization. 
As we pointed out in the proof overview in Section~\ref{sec:overview}, the occurrence of holes is helpful to make the number of oil-water pairs decrease over time.
In Section \ref{sect:BRWS} we introduce a Markov chain which describes an inductive procedure to stabilize $K$ starting from an arbitrary particle configuration.
Such a procedure is  defined in an enlarged probability space where some virtual particles, called \textit{ghosts}, are added to the system whenever a water jumps into a hole. 
We will refer to this procedure as the \textit{ghost-pair stabilization}. The ghosts will play a fundamental role at the end of the proof, in Section~\ref{sect:proof}.

\subsection{Number of waters falling into holes}
\label{sect:number holes}
We start by stabilizing an arbitrary finite set $K \subset V(G)$ following some legal ordering, which we shall determine through a \emph{strategy}.
A strategy for stabilizing  $K$ is a function $F_K : \Omega \to V(G) \cup \emptyset$ that acts as follows.
Given a particle configuration $\eta$, $F_K$ outputs an arbitrary vertex of $K$ that is currently unstable.
If $\eta$ is stable in $K$ then $F_K(\eta)=\emptyset$.

Let $K \subset V(G)$ be a finite set and $F_K$ a strategy. 
We say that we \textit{stabilize $\eta$ in $K$ following strategy $F_K$} when we perform a  sequence of firings
as follows. 
Start by setting $\eta_0 := \eta \in \Omega$ and apply $F_K$ to $\eta_0$. 
If $F_K(\eta_0)=\emptyset$, then we are done as this means that $\eta_0$ is stable.
If $F_K(\eta_0)\neq \emptyset$, then we topple the vertex $F_K(\eta_0)$, and denote by $\eta_1 \in \Omega$ the resulting  configuration. 
If $\eta_1$ is stable then we are done, if not, then we proceed by applying $F_K$ again.
Thus, if $\eta_1$ is unstable, then we proceed to topple $F_K(\eta_1)$, obtaining a new particle configuration which we call $\eta_2 \in \Omega$.
We continue inductively until we reach a random time $T_{F_K}$ at which we have stabilized $K$.
More formally, we set
$$ 
T_{F_K} : = \inf \big \{ i \in \mathbb{N}_{\geq 0} \, \, : \, \, F(\eta_i) = \emptyset \big  \}.
$$
For any $x \in V(G)$, we define the number of times a water falls into a hole
at $x$ while following the strategy $F_K$ starting from 
the particle configuration $\eta_0$,
\begin{equation}\label{eq:def-nr-ghosts}
H_{K, F_K} (x ) := \Bigl | \{ 0\leq i \leq  T_{F_K}-1 \, \, : \, \, 
\eta^w_i(x) = \eta_i^o(x) \, \, \mbox {and } \, \, 
\eta^w_{i+1}(x) = \eta_{i+1}^o(x) +1
     \}  \Bigr |.
\end{equation}
We emphasize that this variable also depends on $ \eta_0$ and on the chosen array $ \tau$, however we will omit this dependency to simplify the notation.

This procedure defines the sequence of particle configurations $(\eta_i)_{i \in [0, T_{F_K}]}$,
where the last step, $i = T_{F_K}$, is  the  step at which the set $K$ is stable.
In the proof of the next proposition, we will need to introduce some variables which depend also on the instructions $\tau$ which are not ``used'' for the stabilization of the initial particle configuration in $K$. For this reason, we will now define also the steps $i > T_{F_K}$ of the stabilization procedure. 
This will allow to define such variables.
Since the set $K$ is stable at step $i = T_{F_K} $, in order to perform some firings we will need to  add new pairs to the stable configuration, making it unstable.
More precisely, for any step $i > T_{F_K}$, we proceed as follows.
\begin{itemize}
\item If $F_K(\eta_i) = \emptyset$ (i.e,  $\eta_i$ is stable in $K$), then  we add one pair at the origin, obtaining the new particle configuration $\eta_{i+1}$, which is unstable in $K$, and we move to the next step $i+1$.  In this case no vertex fires at step $i$.
\item  If $F_K(\eta_i) \neq \emptyset$ (i.e,  $\eta_i$ is unstable in $K$), then the vertex $F_K(\eta_i) \in K$ fires, and we obtain a new particle configuration $\eta_{i+1}$, which might be stable or unstable in $K$. We move to the step $i+1$.
\end{itemize}
Thus, at any step $i > T_{F_K}$ either one unstable vertex fires or a pair is added at the origin.
In this way the infinite sequence of random variables $(\eta_i)_{i \in \mathbb{N}}$  is well defined.

\begin{Lemma}\label{lemma:number holes}
Assume that the system starting from a particle configuration which is distributed as a product of measure $\nu$ is almost surely active. 
Then, for any $\epsilon >0$ and $M \in \mathbb{N}$, there exists $D  = D(\nu , \epsilon , M) < \infty $ large enough such that,
$$
   \, \inf_{  \substack{ K \subset V(G) : \\ \, d(o,K^c) > D }  } \, 
   \, \inf_{ \substack{ F_K : \Omega \to V(G) \cup  \emptyset  : \\ F_K \mbox{\footnotesize { is a  strategy }  } }  } \,
\,  \mathcal{P}_{\nu} \big (\, \, 
H_{K, F_K} (o)  > M \, \, \big )
  \, \,  \geq  \, \, \, 1 - \epsilon.
$$
\end{Lemma}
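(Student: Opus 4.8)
The plan is to fix a finite set $K$ with $d(o,K^c)>D$ and an arbitrary strategy $F_K$, run the ghost-pair-extended stabilization procedure $(\eta_i)_{i\in\mathbb N}$ described above, and track the quantity $N(o):=\eta^o_i(o)-\eta^w_i(o)$ along the procedure (the signed number of unpaired particles at the origin, where a hole corresponds to $N(o)=0$). The key observation is the one highlighted in the proof overview: whenever a neighbor $y\sim o$ fires and it sends \emph{exactly one} particle to $o$ (which happens with probability bounded below by a constant depending only on $\d$), then conditionally this particle is an oil or a water with equal probability $\tfrac12$. So if we look only at the subsequence of times at which $o$ receives exactly one particle from a firing neighbor, the process $N(o)$ performs increments that are $+1$ or $-1$ each with probability $\tfrac12$ (plus possibly some ``deterministic'' increments coming from other events — firings of $o$ itself, receiving two particles, adding a pair at the origin — but those can only help or be controlled). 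Thus $N(o)$ stochastically dominates (the relevant part of) a lazy simple random walk on $\mathbb Z$, which is recurrent, hence returns to $0$ infinitely often; and each return to $0$ from above through a $-1$ step that was caused by a water landing at $o$ (respectively each return from below through a $+1$ step caused by a water) is exactly an event counted by $H_{K,F_K}(o)$.

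First I would make precise the coupling: define the filtration generated by $(\eta_i,\text{instruction used at step }i)$, identify the ``good steps'' as those where a vertex $y\sim o$ fires and the oil-or-water instruction $\tau^{y,\cdot}$ sends precisely one of the two particles to $o$, and check that at every good step, conditionally on the past and on it being a good step, $N(o)$ increases or decreases by $1$ with probability $\tfrac12$ each. Next I would argue that the system being a.s.\ active forces the origin to be fired infinitely often in the \emph{true} dynamics, and — crucially — that in the ghost-pair procedure there are infinitely many good steps before any fixed threshold is reached, \emph{provided} $D$ is large enough; here one uses that activity means $m(o)=\infty$ but more to the point that each of the (finitely many, degree-$\d$) neighbors of $o$ must itself fire many times, and that a positive fraction of those firings are good steps for $o$. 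Then a standard recurrence/hitting-time estimate for the $\pm1$ walk gives that, with probability at least $1-\epsilon$, the walk accumulates more than $M$ sign changes through-zero before any relevant stopping time, and a constant fraction of those correspond to a water (as opposed to an oil) falling into the hole, yielding $H_{K,F_K}(o)>M$.

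The uniformity over $K$ and over $F_K$ is obtained because the lower bound on the good-step probability depends only on $\d$, and because the only role $K$ plays is to guarantee (via $d(o,K^c)>D$) that activity within $K$ is sustained long enough near $o$ — i.e.\ one wants to run the argument entirely inside $K$ and show that before $K$ is stabilized, $o$'s neighbors have fired enough times. This is where one invokes monotonicity (Lemma~\ref{lemma:monotonicity}) and the $0$-$1$ law (Lemma~\ref{lemma:01law}): activity of the infinite system implies, for $D$ large, that with probability $\geq 1-\epsilon/2$ the stabilization of $K$ requires $o$ to fire at least some large number $R=R(M,\epsilon)$ of times (since $m_{B_D,\eta,\tau}(o)\uparrow m(o)=\infty$), and along those $R$ firings of $o$ one extracts the needed good steps.

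\textbf{Main obstacle.} The delicate point is decoupling ``$N(o)$ behaves like a fair $\pm1$ walk'' from ``there are many good steps.'' The strategy $F_K$ is adversarial and may, in principle, refuse to fire neighbors of $o$ for a long time, or fire $o$ in a way that correlates with the instruction array; so the hard part is to show that, regardless of $F_K$, before $K$ stabilizes there must be $\Omega(M)$ good steps for $o$ — i.e.\ that a neighbor of $o$ with an unpaired particle is fired sufficiently often — and to do so while keeping the conditional $\tfrac12$–$\tfrac12$ law of the increments intact (the instructions must be ``revealed'' only when used, so that conditioning on the strategy's choices does not bias which neighbor receives the particle). I expect managing this interaction — adversarial firing order versus the martingale/recurrence structure of $N(o)$, uniformly in $K$ and $F_K$ — to be the crux, and it is presumably handled by revealing instructions lazily and by a careful choice of $D$ depending on a quantitative recurrence estimate for the reflected walk on $\mathbb N$.
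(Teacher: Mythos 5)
Your proposal follows essentially the same route as the paper: the paper tracks $R_j=\eta^w_{t_j}(o)-\eta^o_{t_j}(o)$ at the successive firing times $t_j$ of neighbors of $o$, shows it is a lazy symmetric $\pm1$ walk (holding with probability $1-2(\d-1)/\d^2$), identifies $H_{K,F_K}(o)$ with the number of $0\to+1$ transitions of this walk in its first $N_K$ steps, and gets the required number of steps $N_K\geq N_{B_D}\to\infty$ from activity together with monotonicity and the Abelian property, uniformly in $K$ and $F_K$. The ``adversarial strategy'' issue you flag as the crux is resolved exactly as you suggest (instructions revealed only when used, plus strategy-independence of the firing counts via Lemma~\ref{lemma:AbelianProp}), so there is no genuine divergence from the paper's argument.
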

Before proceeding to the formal proof we present the main idea behind it, which consists in showing that the value of $H_{K, F_K} (o)$ (defined in \eqref{eq:def-nr-ghosts}) can be associated to the number of visits to zero of a lazy simple random walk on $\Z$.
Once we have established this, classical results give that the number of returns to the origin of a simple random walk on the integers is, with high probability, comparable to the square root of the number of steps performed.
The last step of the proof consists in showing that we can in fact let the walk run for as many steps as we need, in order to deduce the claim.
\begin{proof}[Proof of Lemma \ref{lemma:number holes}]
%
%
%
To begin, we fix a finite set $K \subset V(G)$ such that $B_D \subset K$, where $B_D$ is the ball of radius $D$ centered at $o$.
Then we stabilize the set $K$ following an arbitrary strategy $F_K$,  as defined before the statement of Lemma \ref{lemma:number holes}.
For any $j \in \mathbb{N}_{>0}$, we let $t_j$ be the $j$-th time a neighbor of the origin fires. 
More precisely, let 
\[
 \mathcal{N}_o:=\{x\in V(G) \ : \ x\sim o\}
 \]
denote the set of neighbors of $o$, and set $t_0 : =0$.
Thus we define for any $j \in \mathbb{N}_{>0}$,
$$
t_j := \inf \{ i > t_{j-1} \, \, : \, \,F_K(\eta_{i-1}) \in \mathcal{N}_o \}.
$$
In words, $t_j$ denotes the first time after $t_{j-1}$ at which a firing occurs at $\mathcal{N}_o$.
We let 
\begin{equation}\label{eq:N_K}
N_K := \sum\limits_{x \in \mathcal{N}_o} m_{K}(x)
\end{equation}
be the number of times that, during the stabilization of $K$, there is a firing from a nearest neighbor of the origin.
We now define a sequence of random variables $\{R_j\}_{j \geq 0}$, which keeps track of the difference between the number of oils and waters at the origin whenever a firing occurs inside $\mathcal{N}_o$.
Subsequently, we show that these random variables are distributed like the steps of a lazy simple random walk on $\mathbb{Z}$. More precisely, first we set
\[
R_0 :=\eta_{t_0}^w(o) -\eta_{t_0}^o (o),
\]
that is, $R_0$ is the difference between the number of waters and the number of oils at vertex $o$ in the initial configuration.
Secondly, for all integers $ j \in \mathbb{N}_{>0}$, we define
\begin{equation}\label{eq:steps}
R_j :=\eta_{t_j}^w(o) -\eta_{t_j}^o (o).
\end{equation}
Let $\d$ denote the degree of any vertex of $G$, which is vertex-transitive.
Since the difference between the number of oils and waters at $o$ can only change when a neighbor of $o$ fires, it immediately follows that the transition probabilities of the walk are given by the following formulas.
The probability to increase of $1$ unit is given by
$$
\mathcal{P}_\nu \left [ R_{j+1}=R_j +1 \mid R_j \right ] = \mathcal{P}_\nu [\eta_{t_{j+1}}^w(o)=\eta_{t_j}^w(o)+1, \ \eta_{t_{j + 1}}^o(o)=\eta_{t_j}^o(o)]
 = \frac{\d-1}{\d^2}.
$$
Symmetrically, we have
\[
\mathcal{P}_\nu \left [ R_{j+1}=R_j -1 \mid R_j \right ] = \mathcal{P}_\nu [\eta_{t_{j+1}}^w(o)=\eta_{t_j}^w(o), \ \eta_{t_{j + 1}}^o(o)=\eta_{t_j}^o(o) + 1]
 = \frac{\d-1}{\d^2},
\]
and finally
\[
\mathcal{P}_\nu \left [ R_{j+1}=R_j \mid R_j \right ] =1-2\frac{(\d-1)}{\d^2}.
\]
At this point, it is clear that $\{R_j\}_{ j \in \mathbb{N}}$ is distributed as the steps of a symmetric lazy random walk on the integers with a given starting value $R_0$.
For any $j \in \mathbb{N}$, let $\mathcal{J}(j)$ be the number of times the random walk jumps from $0$ to $+1$ in the first $j$ steps, i.e,
$$
\mathcal{J}(j) :=  \big | \big \{ k \in [0,j) \, \, : \, \, 
R_k = 0 \mbox{ and }  R_{k+1} = +1 \,\big \}
\big | .
$$ 
By definition, for $N_K$ as in \eqref{eq:N_K} we have that,
\begin{equation}\label{eq:correspondence holes}
H_{K, F_K}(o) = \mathcal{J}(N_K).
\end{equation}
We deduce that, for any $M \in \mathbb{N}$ and $ \varphi \in \mathbb{N}$, 
\begin{align*}
\mathcal{P}_{\nu} \big ( H_{K, F_K}(o) > M \big ) &  \geq 
\mathcal{P}_{\nu} \big ( H_{K, F_K}(o) > M, \,  N_K > \varphi  \big )  \\
& =  \mathcal{P}_{\nu} \big ( \mathcal{J}(N_K) > M, N_K > \varphi \big ) \\
& \geq  \mathcal{P}_{\nu} \big ( \mathcal{J}(\varphi) > M, N_K > \varphi \big ) \\
& \geq 
\mathcal{P}_{\nu} \big ( \mathcal{J}(\varphi) > M \big )
- 
\mathcal{P}_{\nu} \big ( N_K \leq \varphi  \big ) \\
& \geq \mathcal{P}_{\nu} \big ( \mathcal{J}(\varphi) > M \big )
- 
\mathcal{P}_{\nu} \big ( N_{B_D} \leq \varphi  \big ),
\end{align*}
where in the last step we used  the fact that $B_D \subset K$ and  applied Lemma  \ref{lemma:monotonicity}.
Recall that the starting  value  $R_0 = \eta_0^w(o) - \eta_0^o(o)$  is finite almost surely since $\nu$ has finite expectation.
Since the lazy random walk on $\mathbb{Z}$ is recurrent, we deduce 
that for any $\epsilon \in (0, 1)$ and any  $M \in \mathbb{N}$ and any $\nu$ with finite expectation, 
we can choose a value $\varphi = \varphi(\nu, \epsilon, M)$ large enough such that 
$$
\mathcal{P}_{\nu} ( \mathcal{J}(\varphi) > M ) \geq 1 - \frac{\epsilon}{2}.
$$
Since the system is active by assumption, we deduce that there exists $D$ large enough depending on $\epsilon$ and $\varphi$ such that 
$$
\mathcal{P}_{\nu}(N_{B_D}(o) \leq \varphi) \leq \frac{\epsilon}{2},
$$
where, by  Lemma \ref{lemma:AbelianProp} (Abelian property), the previous estimate holds uniformly in the strategy $F_K$.
Combining the previous estimates,  we obtain that 
for any $\epsilon$ and  $M$ we can set $D = D(\nu, \epsilon, M)$ large enough such that, uniformly in $K \supset B_D$ and in the strategy $F_K$,
$$
\mathcal{P}_{\nu} \big ( H_{K, F_K}(o) > M \big ) \geq 1 - \epsilon.
$$
This concludes the proof.
\end{proof}

\subsection{Ghost-pair stabilization}
\label{sect:BRWS}
In this section we define a stabilization procedure where we introduce some auxiliary (virtual) particles, which we will call \emph{ghosts}.
These auxiliary particles do not interact with oils nor waters and perform independent simple random walks.
Each step of the procedure corresponds either to an 
oil-water pair performing a simple random walk step from an unstable vertex, or a ghost performing a simple random walk step and, at any given step of the procedure, at most one ghost is created.
We will refer to this stabilization procedure as  \emph{ghost-pair stabilization}. The procedure is defined in an augmented set of configurations, which we denote by
\[
\widetilde \Omega : =  \mathbb{N}^{ V(G) } \times  \mathbb{N}^{ V(G) }  \times  \mathbb{N}^{ V(G) } , 
\]
where $(\tilde \eta^o, \tilde \eta^w, \tilde \eta^g) \in \widetilde \Omega$ is a triplet such that 
$\tilde \eta^q(x)$ denotes the number oils, waters or ghosts  which are located at $x \in V(G)$ when $q=o$, $q=w$, $q=g$ respectively. 
As before, $\Omega$ will continue to denote the set of configurations of (only) oil and water particles.

\begin{Definition}[Ghost-pair stabilization]	\label{def:ghost-pair}
Let $K \subset V$ be a finite set, let $\sigma \in \Omega$ denote an unstable particle configuration (consisting only of oils and waters, but no ghosts).
At time zero, we start from a configuration
$\tilde {\eta_0} \in \widetilde \Omega$ such that oils and waters are placed according to $\sigma$, that is $\sigma = (\tilde \eta^o_0,\tilde  \eta^w_0) \in  \Omega$ and, moreover, no ghost is present, i.e., $\tilde \eta_0^g(z) = 0$ for all $z \in V(G)$.
 We let $\boldsymbol{\delta}_x \in \mathbb{N}^{V(G)}$ be the vector which equals one at $x \in V(G)$ and zero everywhere else.
Inductively, for every integer $t \geq 0$, we first follow (i) and then (ii) described below.
\begin{enumerate}[(i)]
\item Either a ghost or an oil-water pair in $\tilde \eta_t$ which are located on a vertex of $K$ perform a simple random walk step, where the latter means that an oil and a water which are located at the same vertex take one independent step according to simple random walk. 
This leads to a new particle configuration which we call $ \theta_t \in \tilde \Omega$.
\item  If during (i) a water falls into a vertex $x \in K$ which is hosting a hole (i.e., $ {\tilde \eta}_t^o(x) = \tilde {\eta}_t^w (x)$ and  $ \theta_t^w(x) = \theta_t^o(x) + 1$), then a ghost is added at that vertex, that is, 
\[
\tilde {\eta}_{t+1}^g := \theta_t^g + \boldsymbol{\delta}_x, \quad \text{ and }\quad \tilde \eta_{t+1}^q := \theta_{t+1}^q, \quad q \in \{o,w\},
\]
otherwise nothing happens,
(i.e,  $\tilde \eta_{t+1} : = \theta_t$).
This defines $\tilde \eta_{t+1}$. 
\end{enumerate}
Since $K$ is finite, after  an almost surely finite number of steps no pair and no ghost is present in $K$ and the procedure stops.
We define 
\[
T=T(K):=\inf \{s\geq 0 \ : \ K\text{ is stable with respect to }(\tilde \eta^o_s, \tilde \eta^w_s)\text{ and }\tilde{\eta}^g_s(y) =0 \, \, \forall y \in K\},
\]
and for every $t \geq T$ we set $\tilde \eta_t := \tilde \eta_T$.
In the following we set, for any $y \in K$,
\[
\tilde m (y):=\# \{\text{times that either a ghost or an oil-water pair jumps from } y\},
\]
and we denote by  $\tilde P_{K, \sigma}$ the law of the ghost-pair stabilization.
\end{Definition}
The lemma below is the main step in the proof of our main result. 
It shows that, during the stabilization procedure started from an arbitrary (unstable) configuration $\sigma$, 
the expected value of $\tilde m (y)$, for any fixed $y \in K$, 
can be estimated in terms of the Green's function of simple random walk and of the number of pairs in the initial configuration.
\begin{Lemma}\label{lemma:BRW}
For any finite set $K \subset V$,   any vertex $y \in K$, and  any unstable particle configuration $\sigma :=(\tilde \eta^o_0,\tilde \eta^w_0)\in \Omega$,
\begin{equation}\label{eq:green-fct}
\tilde E_{K, \sigma} \big (  \tilde  m ( y) \big )  =  \sum\limits_{x \in K}  \, \big ( \tilde \eta^o_0(x) \wedge \tilde \eta^w_0(x) \, \big) \, G_K(x,y),
\end{equation}
where $\tilde E_{K, \sigma}$ denotes the expectation with respect to $\tilde P_{K,  \sigma}$.
\end{Lemma}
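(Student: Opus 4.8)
The plan is to identify $\tilde m(y)$ as a sum, over oil-water pairs, of the number of visits to $y$ made by an associated random walk, and then to exploit the key fact that in the ghost-pair stabilization each \emph{jump} — whether of an oil-water pair or of a ghost — is literally one step of simple random walk on $G$, restricted to $K$. The crucial bookkeeping observation is a conservation law: whenever a water falls into a hole at a vertex $x$, we lose one oil-water pair at $x$ but simultaneously create one ghost at $x$; and whenever (i) moves an oil-water pair off an unstable vertex without a water landing in a hole, the pair simply relocates (it does a joint SRW step, and since both endpoints are in the ``paired'' state the number of pairs is preserved). Thus the quantity ``(number of oil-water pairs at $x$) $+$ (number of ghosts at $x$)'', summed over $x$, is constant in time until killing, and each unit of it performs an honest SRW step each time it is selected to move. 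In other words, the ghost-pair stabilization is exactly a system of $\sum_{x\in K}(\tilde\eta^o_0(x)\wedge\tilde\eta^w_0(x))$ independent simple random walks on $G$, each started at the location of a pair in $\sigma$ and each killed upon exiting $K$, where ``oil-water pair'' and ``ghost'' are just two names for the same walking token (the relabelling at (ii) does not affect the trajectory).

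Granting that, the argument is short. Fix a pair initially at $x\in K$. The total number of jumps performed by the token it spawns (first as a pair, then as a ghost after some water-into-hole event, possibly changing names again if... no — once it becomes a ghost it stays a ghost and walks freely) equals the number of steps of a SRW started at $x$ and killed on exiting $K$; in particular it is almost surely finite, which is why $T(K)<\infty$. The number of those jumps that start from $y$ is, by definition, the contribution of this token to $\tilde m(y)$. Its expectation is $E_x[\#\{0\le t\le \tau_{K^c}: X(t)=y\}] = G_K(x,y)$, exactly the Green's function defined in Section~2.3 (note $G_K(x,y)$ counts visits up to and including the exit time, matching the convention that a token at $y$ contributes a jump from $y$). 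Summing over all $\sum_{x\in K}(\tilde\eta^o_0(x)\wedge\tilde\eta^w_0(x))$ tokens, and using linearity of expectation together with the fact that $\tilde m(y)$ is by construction the sum over tokens of their individual jump-from-$y$ counts, yields
\[
\tilde E_{K,\sigma}\big(\tilde m(y)\big)=\sum_{x\in K}\big(\tilde\eta^o_0(x)\wedge\tilde\eta^w_0(x)\big)\,G_K(x,y),
\]
which is the claim.

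The step I expect to be the main obstacle — and the one deserving the most careful writing — is making the ``system of independent SRWs'' picture rigorous, i.e.\ constructing an explicit coupling between the ghost-pair stabilization and a family of independent killed simple random walks indexed by the initial pairs. One has to argue that the \emph{choice} of which token moves at each step $t$ (made by whatever rule is implicit in (i)) does not bias any individual token's trajectory: each token, when selected, reads the next instruction from its own private i.i.d.\ SRW instruction stream, and the selection rule depends only on the current configuration, not on the unused instructions, so by the strong Markov property (applied to the interleaving of the streams, as in the Abelian-property / ``sequential SRW'' arguments of Section~2.2 and of~\cite{RollaSidoravicius,BondLevine}) each token's sequence of jumps is distributed as an unkilled SRW from its start, observed until the first time it would leave $K$; after that time it is frozen and contributes no more jumps. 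Equivalently, one can phrase this via the martingale/optional-stopping route hinted at in Remark~\ref{Remark:supermartingale}: for each $w\in K$ the process $t\mapsto \sum_{x\in K} p_t(x)\,G_K(x,y)$, where $p_t(x)$ counts pairs-plus-ghosts at $x$ after $t$ steps, is a martingale (the Green's function is harmonic in $K\setminus\{y\}$ with the right boundary and diagonal behaviour, so a single SRW step of one token preserves the conditional expectation up to the contribution of steps \emph{from} $y$), and stopping at $T(K)$ converts the drop in this martingale into the expected number of jumps from $y$. Either way, once the coupling is in place the computation is immediate from the definition of $G_K$ in Section~2.3.
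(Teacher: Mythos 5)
There is a genuine gap: the ``conservation law'' on which your main argument rests is false. In the ghost-pair stabilization, when an oil-water pair fires from $b$, the oil and the water jump to \emph{independently} chosen uniform neighbours $z_o$ and $z_w$ of $b$; the pair does not relocate as a single token. The change in the total number of pairs is $-1+\mathbbm{1}\{\tilde\eta^o(z_o)<\tilde\eta^w(z_o)\}+\mathbbm{1}\{\tilde\eta^o(z_w)>\tilde\eta^w(z_w)\}$, and since a ghost is created only when $z_w$ is a hole, the change in (pairs $+$ ghosts) equals $-1+\mathbbm{1}\{\tilde\eta^o(z_o)<\tilde\eta^w(z_o)\}+\mathbbm{1}\{\tilde\eta^o(z_w)\ge\tilde\eta^w(z_w)\}$, which takes each of the values $-1,0,+1$ with positive probability depending on the environment of unpaired particles at the destinations. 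For instance, if the water lands on a vertex with unpaired waters while the oil lands on a vertex with unpaired oils, one pair is lost and no ghost is created; symmetrically, the count can increase by one. So pairs-plus-ghosts is not constant along the trajectory, there is no fixed family of ``tokens'', and the pair dynamics is genuinely a branching random walk --- this is exactly the point of Remark~\ref{Remark:supermartingale}, which states that the unweighted count is only a supermartingale. Consequently the pathwise coupling with independent killed simple random walks, and the identification of $\tilde m(y)$ as a sum over tokens of SRW visit counts, do not exist.

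What does survive is the first-moment statement: conditional on the configuration, the expected offspring measure of a firing at $b$ is exactly $1/\d_b$ on each neighbour (each neighbour lies in exactly one of the two classes above), so the expected increment of any test function summed against pairs-plus-ghosts is governed by the simple random walk kernel. Proving this is the actual content of the lemma, and it is what the paper's case analysis does: one shows that $M_t=\sum_{x\in K}\bigl(\tilde\eta^o_t(x)\wedge\tilde\eta^w_t(x)+\tilde\eta^g_t(x)\bigr)g_x-(\bigtriangleup g)_y\sum_{i\le t}\mathbbm{1}\{x_i=y\}$, with $g$ harmonic in $K\setminus\{y\}$, $g_y=1$ and $g\equiv 0$ on $K^c$, is a martingale, and then applies optional stopping at $T$. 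Your parenthetical ``equivalently, one can phrase this via the martingale/optional-stopping route'' is in fact the only workable route and is essentially the paper's proof; it needs to be promoted from an aside to the argument itself, with the harmonicity computation for a firing pair (summing $g_{z_o}$ and $g_{z_w}$ over the two neighbour classes, as in \eqref{eq:compt2}) written out explicitly, since that is where the lemma is actually proved.
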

\begin{proof} 
Let $K \subset V$ be a finite set, fix one vertex $y \in K$. 
Let $g : V(G) \mapsto \mathbb{R}$ be the  function which is harmonic in $K \setminus \{y\}$ and such that $g_y=1$, $g_z=0$ for any $z \in K^c $.
Recall that $\tilde \eta_t$ denotes the state of the process (cf.\ Definition \ref{def:ghost-pair}) at time $t$. 
For convention, we refer to as \emph{step $t$} the transition from $\tilde \eta_{t-1}$ to $\tilde \eta_t$, and let $x_t$ denote the vertex from which a pair or a ghost jumps at step $t$.
For each $t\in \N_{\geq 0}$ define 
\begin{equation}
   M_t : = \sum_{x \in K }  \Big ( \tilde \eta_t^o(x) \wedge \tilde \eta_t^w(x)  + \tilde \eta_t^g(x)\Big ) \,g_x
      -
     (\bigtriangleup g)_y \, \, \sum\limits_{i=1}^{t} \mathbbm{1}\{ x_i = y   \}.
     \label{eq:mt}
\end{equation}
Let $( \tilde \Sigma, \tilde{\mathcal{F}}, \tilde P_{K, \sigma})$ be the probability space where the process $\{\tilde \eta_t\}_t$ is defined; the proof of the proposition will follow from the fact that $M_t$ is a martingale, namely 
\begin{equation}\label{eq:Martingale}
\tilde E_{K, \sigma} [ \, M_t \, \bigm|  \, \mathcal{F}_{t-1}  \, ] = M_{t-1} .
\end{equation}
We will now prove (\ref{eq:Martingale})
considering different cases.

In the first case, consider that at step $t$ a ghost jumps from $x_t = b \in K$.
Then, in this case,
\begin{equation}\label{eq:compt1}
\tilde E_{K, \sigma} \, [ \, M_t \, \bigm|  \, \mathcal{F}_{t-1}  \,  ] =  M_{t-1} \, \,  -
g_b \, \,  + 
\frac{1}{\d_b}
\Big ( \,
\sum\limits_{z \sim b} g_{z}  \, \Big)  - 
\mathbbm{1}  \{b = y \} ( \bigtriangleup g)_y
=M_{t-1},
\end{equation}
where the last identity holds since $g$ is harmonic in $K \setminus \{y\}$.

In the second case, consider that at step $t$ an oil and water pair jumps from some vertex $x_t = b \in K$.
Let $\mathcal{N}_{b,t}^{oe}$ (resp.\ $\mathcal{N}_{b,t}^w$ ) be the set of vertices $z \in V(G)$ such that $z \sim b$
and $\tilde \eta_{t-1}^o(z) - \tilde \eta^w_{t-1}(z) \geq 0$ 
(resp.\  $\tilde \eta_{t-1}^o(z) - \tilde \eta^w_{t-1}(z) <0$).
Note that $\mathcal{N}_{b,t}^{oe}$ and 
$\mathcal{N}_{b,t}^w$ are measurable with respect to $\mathcal{F}_{t-1}$.
Then, denoting by $z_o$ (resp.\ $z_w$) the destination of the oil (resp.\ water) in the next sum,
\begin{equation}\label{eq:compt2}
\begin{split}
\tilde E_{K, \sigma} & [ \, M_t \, \bigm|  \, \mathcal{F}_{t-1}  \,  ]  =   M_{t-1} \, \,  - g_b \, \,  +  \frac{1}{\d_b^2}
\Big ( \,
\sum\limits_{ \substack{ z_w \in \mathcal{N}_{b,t}^{oe}},   z_o \in \mathcal{N}_{b,t}^{oe}} g_{z_w}  \, \Big)  \,  + \\ 
& \, + \frac{1}{\d_b^2}
\Big ( \,
\sum\limits_{ \substack{  z_w \in \mathcal{N}_{b,t}^{oe}, z_o \in \mathcal{N}_{b,t}^{w}}} \big ( g_{z_o} + g_{z_w} \big )  \, \Big )  \, +  
\frac{1}{\d_b^2}
\Big ( \,
\sum\limits_{ \substack{ z_w \in \mathcal{N}_{b,t}^{w}},   z_o \in \mathcal{N}_{b,t}^{w}} g_{z_o}    \, \Big )  \,   -  \, 
\mathbbm{1}  \{ b = y \} ( \bigtriangleup g)_y \\
& =
M_{t-1} \, \,  - \, \, 
g_b \, \,  + 
\frac{| \mathcal{N}_{b,t}^w|+| \mathcal{N}_{b,t}^{oe}|}{\d_b^2} \Big ( \,
\sum\limits_{z \sim b} g_{z} \,  \Big )  \, 
 -  \, 
\mathbbm{1}  \{ b = y \} ( \bigtriangleup g)_y \\
& = M_{t-1} \, \,  -
g_b \, \,  + 
\frac{1}{\d_b}
\Big ( \,
\sum\limits_{z \sim b} g_{z}  \, \Big)  - 
\mathbbm{1}  \{ b = y \} ( \bigtriangleup g)_y \\
& = M_{t-1},
\end{split}
\end{equation}
 where the last identity follows from the fact that $g$ is harmonic in $K \setminus \{y\}$.
This concludes the proof of (\ref{eq:Martingale}).

Now we prove the lemma  using (\ref{eq:Martingale}).
Recall that $T$ is the first time at which the set $K$ is stable and no ghost is present in $K$.
Since $K$ is finite, $\E T<\infty$ almost surely, furthermore $M_t$ has bounded increments, thus,
the conditions of the optional stopping theorem are fulfilled and we deduce that
$$
\tilde E_{K,  \sigma} \, [ \, M_{T} \, ] 
= \tilde E_{K, \sigma} \, [ \, M_{0} \, ] = \sum\limits_{x \in K} \big (   \tilde \eta^o_0(x) \wedge \tilde \eta^w_0(x) \big )\, g_x,
$$
recalling that $\tilde \eta^o_0 \wedge \tilde \eta^w_0$ corresponds to the number of pairs at $x$ in the initial configuration $\sigma$ and that we start with no ghost at time zero.
This leads to, 
\begin{equation}\label{eq:finalequation}
- ( \bigtriangleup g)_y  \, 
\tilde E_{K, \sigma} \big ( \, \tilde m(y) \, \big ) =
- ( \bigtriangleup g)_y  \, 
\tilde E_{K, \sigma} \Big ( \sum\limits_{t=1}^{\infty}
\mathbbm{1} \{x_t = y \}  \Big ) = \sum\limits_{x \in K} \big (  \tilde \eta^o_0(x) \wedge \tilde \eta^w_0(x) \big )\, g_x.
\end{equation}
Using Proposition \ref{prop:classical facts}, we obtain 
\begin{eqnarray*}
   \tilde E_{K, \sigma} \big ( \, \tilde m(y) \, \big ) 
   &=& \sum\limits_{x \in K} \big (  \tilde \eta^o_0(x) \wedge \tilde \eta^w_0(x) \big )\, g_x G_K(y,y)\\
   &=& \sum\limits_{x \in K} \big (  \tilde \eta^o_0(x) \wedge \tilde \eta^w_0(x) \big )\, P_x(\tau_y < \tau_{K^c}) G_K(y,y)\\
   &=& \sum\limits_{x \in K} \big (  \tilde \eta^o_0(x) \wedge \tilde \eta^w_0(x) \big )\, G_K(x,y).
\end{eqnarray*}
\end{proof}

\begin{Remark}\label{Remark:supermartingale}
In the overview in Section~\ref{sec:overview}, we noticed that the oil-water pairs move as a mix of simple random walk, critical branching random walk, and subcritical branching random walk, depending on the environment. 
In particular, the total number of pairs which are present in the oil and water system (with no introduction of ghosts) is a super-martingale. 
In fact, if we fire a vertex that does not neighbor a hole, 
then the number of oil-water pairs behaves as a martingale; otherwise, the expected number of pairs strictly decreases.
It is extremely hard to control the evolution of the system consisting exclusively of oil-water pairs, 
because this requires controlling the evolution of the configuration of holes and of pairs at the same time, which are strongly correlated.
The introduction of ghosts compensates the pairs that are lost when we fire a vertex neighboring a hole. 
In particular, if we were to define $M_t$ as simply $\sum_{x \in K } ( \tilde \eta_t^o(x) \wedge \tilde \eta_t^w(x)  + \tilde \eta_t^g(x))$, we would be able to show that $M_t$ is a super-martingale (where 
it would not be a martingale only due to particles or ghosts jumping out of $K$).
The introduction in $M_t$ of the function $g$, which is harmonic everywhere in $K$ but at $y$, is to make each firing at $y$ give an extra contribution.
This allowed us to add the negative term at the end of~\eqref{eq:mt}, which counts the number of times that a pair or a ghost jumps from $y$; that is, it allows us to estimate $\tilde m(y)$.
Both ghosts and pairs contribute to the total number of jumps $\tilde m(y)$, and to show fixation we actually need to control only the contribution given by oil-water pairs. 
In Section \ref{sect:proof}, we will isolate the two contributions and compare them.
\end{Remark}

\section{Proof of Theorem \ref{thm:fixation}}\label{sect:proof}
In this section we present the proof of our main theorem, which works by contradiction and uses the ghost-pair stabilization (recall Definition \ref{def:ghost-pair}).
As explained in Section \ref{sect:BRWS}, the expected number of pairs which are present in the system when a firing occurs at a nearest neighbor of a hole is strictly decreasing.
Ghosts are introduced to compensate the loss of pairs,
in such a way that the  total number of pairs and ghosts which are present at any step of the ghost-pair stabilization is a martingale.
The proof of the theorem is based on the following idea. Suppose the system is active. Then, Lemma \ref{lemma:number holes} implies that a \emph{large} number of ghosts is produced at most vertices; but  ghosts are produced to compensate the decrease in the number of pairs.
Thus if many ghosts are produced, that means that a \emph{large} number of pairs was lost.  
The proof consists in showing that it is not possible to produce so many ghosts if we start with a finite density of pairs, leading to the desired contradiction. 
To show this fact we will exploit the Green's function of a suitably defined random walk to relate the expectation of three different quantities, namely the number of particles which start from every  vertex, the number of ghosts which are produced at every  vertex and the number of times a ghost or a pair visit the origin. 

To begin, we state an auxiliary result.
From now on, fix an arbitrary sequence of finite sets, namely the sequence of balls centered at the origin and of radius $L\geq 1$, which we denote by $\{B_L\}_{L\in \N}$.
\begin{Lemma}\label{lem:PropertiesGreen}
For any $D \in \mathbb{N}$ there exists $L_0 = L_0(D)$ large enough such that, for any $L > L_0$,
$$
\sum\limits_{x \in B_L} G_{B_L}(x,o) < 10 \sum\limits_{ \substack{ x \in B_L :  \\ B(x,D) \subset B_L}} G_{B_L}(x,o),
$$
where $B_L^c := V(G) \setminus B_L$, and $B(x, D)$ is the ball of radius $D$ centered at $x$.
\end{Lemma}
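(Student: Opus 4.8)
The plan is to show that the ``boundary shell'' of vertices $x \in B_L$ for which $B(x,D) \not\subset B_L$ contributes only a vanishing fraction of $\sum_{x \in B_L} G_{B_L}(x,o)$ as $L \to \infty$, so that for $L$ large the ``interior'' vertices already account for more than $\tfrac{1}{10}$ of the total sum. Equivalently, writing
\[
\Sigma_L := \sum_{x \in B_L} G_{B_L}(x,o), \qquad \Sigma_L^{\mathrm{bad}} := \sum_{\substack{x \in B_L : \\ B(x,D) \not\subset B_L}} G_{B_L}(x,o),
\]
it suffices to prove $\Sigma_L^{\mathrm{bad}} / \Sigma_L \to 0$ as $L \to \infty$ (with $D$ fixed), since then the complementary interior sum exceeds $\tfrac{9}{10}\Sigma_L > \tfrac{1}{10}\Sigma_L$ for all large $L$.

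First I would rewrite both sums using the classical identity \eqref{eq:G=Ptau} from Proposition~\ref{prop:classical facts}: $G_{B_L}(x,o) = P_x(\tau_o < \tau_{B_L^c}) G_{B_L}(o,o)$, so the factor $G_{B_L}(o,o)$ cancels in the ratio and I am left comparing $\sum_{x} P_x(\tau_o < \tau_{B_L^c})$ over the two index sets. By reversibility of simple random walk on the vertex-transitive graph $G$ (all degrees equal to $\d$), $P_x(\tau_o < \tau_{B_L^c}) = P_o(\tau_x < \tau^+_{B_L^c \cup \{o\}}) \cdot \frac{1}{1 - P_x(\tau_x^+ < \tau_{B_L^c \cup \{o\}})}$ — this is exactly the computation already carried out inside the proof of Lemma~\ref{lemma:Green}. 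Applying Lemma~\ref{lemma:Green} itself with $Q = B_L$ and $B$ each of the two index sets, I get
\[
\Sigma_L = G_{B_L}(o,o)\Bigl[1 + E_o\bigl[\#\{t \le \tau^+_{B_L^c \cup \{o\}} : X(t) \ne o\}\bigr]\Bigr] = G_{B_L}(o,o)\, E_o\bigl[\tau^+_{B_L^c \cup \{o\}}\bigr],
\]
and similarly $\Sigma_L^{\mathrm{bad}} = G_{B_L}(o,o)\bigl[\delta_{o \in \mathrm{bad}} + E_o[\#\{t \le \tau^+_{B_L^c \cup\{o\}} : X(t) \in \mathrm{bad}\setminus\{o\}\}]\bigr]$, where ``$\mathrm{bad}$'' denotes the shell $\{x \in B_L : B(x,D) \not\subset B_L\} \subseteq \{x : d(x, B_L^c) \le D\}$. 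For $L > D$ the origin is not in the shell, so the ratio becomes
\[
\frac{\Sigma_L^{\mathrm{bad}}}{\Sigma_L} = \frac{E_o\bigl[\#\{0 < t \le \tau^+_{B_L^c \cup\{o\}} : d(X(t), B_L^c) \le D\}\bigr]}{E_o\bigl[\tau^+_{B_L^c \cup\{o\}}\bigr]}.
\]

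It remains to argue this ratio tends to $0$. The numerator counts excursion-time spent within distance $D$ of the complement of $B_L$ before returning to $o$ or exiting $B_L$; the denominator is the full expected excursion length. The intuition is that a random walk started at $o$ and run until it hits $o$ again or leaves $B_L$ typically spends only a small fraction of its time in the thin outer shell of $B_L$, because to be in the shell it must first travel distance $\approx L - D$ from the origin, and from the shell it is about as likely to be killed (by exiting $B_L$) as to wander back in. Concretely, I would bound the numerator by $\sum_{x : d(x,B_L^c)\le D} G_{B_L}(x,o)$ and the denominator from below by, say, $\sum_{x \in B_{L/2}} G_{B_L}(x,o) \ge c\, |B_{L/2}| \cdot \min_{x \in B_{L/2}} P_x(\tau_o < \tau_{B_L^c}) \cdot G_{B_L}(o,o)$. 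The main obstacle is making this quantitative uniformly over all infinite vertex-transitive graphs of finite degree, since such graphs range from recurrent (e.g.\ $\Z$, $\Z^2$) to transient (e.g.\ $\Z^d$, $d\ge 3$, trees) with very different Green's function behavior. I expect the cleanest route is to avoid heat-kernel estimates entirely and instead use a direct probabilistic/geometric argument: the shell $\{x : d(x, B_L^c) \le D\}$ is separated from $B_{L-2D}$ by distance $D$, and a walk in the shell has probability at least $\d^{-D}$ of exiting $B_L$ (hence being killed) within the next $D$ steps before it can return deep inside; coupled with the fact that a positive fraction of the excursion mass sits at bounded distance from $o$, this forces the shell's contribution to be $O(1)$ while the total grows without bound — or at least grows by a factor exceeding $10$ — as $L \to \infty$. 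If a fully uniform bound proves delicate, the fallback is that one only needs the factor $10$, not $o(1)$, which gives substantial slack and allows crude estimates such as bounding $P_x(\tau_o < \tau_{B_L^c})$ for shell vertices $x$ against $P_x(\tau_o < \tau_{B(x,D)^c}) \le$ a constant $<1$ uniform in the graph, times the escape probability, while noting $\sum_{x \in B_L} G_{B_L}(x,o) \to \infty$ since $B_L \uparrow V(G)$ and $G_{B_L}(x,o)$ increases to $G(x,o) > 0$ pointwise.
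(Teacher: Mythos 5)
Your proposal is correct and follows essentially the same route as the paper's proof: both apply Lemma~\ref{lemma:Green} to rewrite the two sums as expected occupation times of a single excursion from $o$ killed on $B_L^c\cup\{o\}$, bound the time spent in the width-$D$ boundary shell by $D\d^D$ times the probability of ever reaching it (via the uniform $\d^{-D}$ chance of exiting $B_L$ within $D$ steps from any shell vertex), and lower-bound the interior occupation by the $\geq L-D$ steps the excursion must take before it can touch the shell. The paper makes the comparison by factoring out the common probability $P_o(\tau_{(B_L\setminus A_{L,D})^c}<\tau_o^+)$ from both bounds and choosing $L_0\geq D(1+\d^D)$, which is exactly the quantitative form of your ratio argument.
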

We will now prove Theorem \ref{thm:fixation} using Lemma \ref{lem:PropertiesGreen}. The proof of Lemma \ref{lem:PropertiesGreen} will be presented afterwards.
\begin{proof}[\textbf{Proof of Theorem \ref{thm:fixation}}]
To begin, for any $L$ fixed and arbitrarily large, consider the following procedure.
Stabilize the set $B_L$ following the ghost-pair stabilization:  while stabilizing the set $B_L$, every time a water falls into a hole, a ghost is created at that vertex. 
Ghosts perform independent simple random walks until they leave $B_L$.
For any $x\in B_L$ we define,
\[
\begin{split}
& \CorG_L(x)  :=\text{number of \emph{pairs} or \emph{ghosts} that jump from }x\text{ during the stabilization of }B_L,\\
& m_L(x)  :=\text{number of firings at }x\text{ during the stabilization of }B_L,\\
& \GVis_L(x)  :=\text{number of \emph{ghosts} that jump from }x\text{ during the stabilization of }B_L,\\
& \GSt_L(x)  :=\text{number of ghosts \emph{started} (created) at }x\text{ during the stabilization of }B_L.
\end{split}
\]
Recall that $\mu = \mu (\nu) \in (0, \infty)$ is the expected number of particles which are present at each vertex in the starting configuration.
 We claim that, for any $L \in \mathbb{N}$,
\begin{align}\label{eq:EwL}
\tilde \E_{\nu}   \bigl ( \CorG_L(x)\bigr ) & \leq \sum_{y\in B_L}\mu \, G_{B_L}(y,x); \\
\label{eq:EgL}
\tilde \E_{\nu} \left  ( \GVis_L(x) \right  ) &  = \sum_{y\in B_L} \tilde \E_{\nu} \bigl ( \GSt_L(y)\bigr )G_{B_L}(y,x),
\end{align}
where $\tilde \E_{\nu}$ denotes the expectation of the measure which is defined in the enlarged probability space of oils, waters and ghosts.
Equation (\ref{eq:EwL}) follows from Lemma \ref{lemma:BRW} by averaging over the initial particle configuration and observing that the expected number of pairs of the initial configuration at every vertex cannot be larger than the expected number of particles.
Equation  (\ref{eq:EgL}) follows from linearity of expectation and from the fact that every ghost performs an independent simple random walk until it leaves $B_L$.
We also claim that, if we assume that the system starting with initial particle distribution $\nu$ is almost surely active, then there is a large enough $D = D(\nu)$ such that for any $L > D$, and for any $x \in B_L$ such that $ B(x, D) \subset B_L$,
\begin{equation}
\label{eq:3/4mu2}
\tilde \E_{\nu} \bigl [ \GSt_{L}(x) \bigr ]  \geq 10 \mu.
\end{equation}
Indeed, equation (\ref{eq:3/4mu2}) follows from Lemma
\ref{lemma:number holes} and from the fact that $G$ is vertex-transitive, since, by definition, a ghost is produced at $x$ every time a water falls into a hole and the estimate in Lemma \ref{lemma:number holes} holds uniformly over all strategies.

For the rest of the proof, we will keep assuming that the system is almost surely active and we will look for a contradiction. We will also keep the value $D$ fixed as above.
By definition, $m_L(x)$ is the number of times that a pair jumps from $x$, and this number equals the number of times that a ghost or a pair jump from $x$ minus the number of times a ghost jumps from $x$, that is,
\[
m_L(x)=\CorG_L(x)-\GVis_L(x).
\]
It follows from the linearity of expectation and from 
(\ref{eq:EwL}), (\ref{eq:EgL}), and (\ref{eq:3/4mu2}), that
\begin{align}\label{eq:EmL}
\tilde \E_{\nu} \bigl ( m_L(x)\bigr ) &  \leq   \sum_{y\in B_L}\mu G_{B_L}(y,x)-\sum_{y\in B_L} \tilde \E_{\nu} \bigl ( \GSt_L(y)\bigr )G_{B_L}(y,x) \\  \nonumber
& \leq 
\sum_{y\in B_L}\mu G_{B_L}(y,x) - \sum_{ \substack{ y\in B_L : \\  B(y,D) \subset B_L}} \tilde \E_{\nu} \bigl ( \GSt_L(y)\bigr )G_{B_L}(y,x) \\  \label{eq:towards-prop}
& \stackrel{\eqref{eq:3/4mu2} }{\leq } \mu \, \, \Big ( 
\sum\limits_{y \in B_L} G_{B_L}(y,x) - 10  \sum_{ \substack{ y\in B_L : \\  B(y,D) \subset B_L}} G_{B_L}(y,x)
\Big ) .
\end{align}
From Lemma \ref{lem:PropertiesGreen}, we conclude that $\tilde \E_{\nu} \bigl ( m_L(o)\bigr ) < 0 $ for large enough $L$. 
Since the number of firings cannot be negative, the above leads to the desired contradiction. We conclude that the probability that the system is active is strictly smaller than 1. 
By Lemma \ref{lemma:01law} (the 0-1 law), we deduce that the system fixates almost surely, concluding the proof.
\end{proof}

It remains to prove Lemma \ref{lem:PropertiesGreen}.

\begin{proof}[Proof of Lemma \ref{lem:PropertiesGreen}]
Pick $L_0$ very large such that 
\begin{equation}\label{eq:choiceL}
L_0\geq D\left ( 1+\d^D \right ).
\end{equation}
For $L\geq L_0$, consider the set $B_L$ and on each vertex on the external boundary of $B_L$ (denoted by $\partial B_L$) place a ball of radius $D$, and define the annulus
\[
A_{L,D}:=\bigcup_{x \in  \partial B_L}B(x,D).
\]
Note that by construction it follows that
\begin{equation}\label{eq:Binclusion}
B_{L-D}\subset \bigl (B_L\setminus A_{L,D}\bigr ).
\end{equation}
To establish the lemma, it suffices to show that 
\[
\sum_{y\in  B_L\setminus A_{L,D}}-9 G_{B_L}(y,o)+\sum_{y\in A_{L,D}\cap B_L} G_{B_L}(y,o)<0.
\]
Now we will apply Lemma \ref{lemma:Green}, and as a shorthand for all sets $B \subset Q$ we set
\[
\begin{split}
& E_o \mathcal{R} (B, Q^c) := 
E_o \bigl [ \# \text{steps of random walk in }B\setminus \{o\}\text{ before returning to } Q^c\cup \{o\}\bigr ],
\end{split}
\]
that is, the expected ``range'' (from which the symbol $\mathcal{R}$) made by a random walk started at $o$ inside the set $B\setminus \{o\}$ before exiting $Q$ or reaching $\{o\}$.
Therefore we need to show
\[
- 9 \left  [\delta_{o\in B_L\setminus \, A_{L,D}}+E_o \mathcal{R} \Bigl (B_L\setminus A_{L,D}, B_L^c \Bigr )\right ] +\Bigl [\delta_{o\in A_{L,D}\cap B_L}+E_o \mathcal{R} \bigl (A_{L,D}\cap B_L, \, B_L^c \bigr )\Bigr ]<0.
\]
We start by observing that by \eqref{eq:Binclusion} we have that $\delta_{o\in B_L\setminus A_{L,D}}=1$ and $\delta_{o\in A_{L,D}\cap B_L}=0$.
As a consequence, the above is equivalent to 
\begin{equation}\label{eq:E-bound}
-9 \Bigl [1+E_o \mathcal{R} \Bigl (B_L\setminus A_{L,D}, B_L^c \Bigr )\Bigr ]  + E_o \mathcal{R} \bigl (A_{L,D}\cap B_L, \, B_L^c \bigr )<0.
\end{equation}
Thus, at this point the only thing that remains to be shown is that for all $L\geq L_0$ we have
\begin{equation}\label{eq:TBS}
E_o \mathcal{R} \Bigl (B_L\setminus A_{L,D},B_L^c \Bigr )\geq  E_o \mathcal{R} \bigl (A_{L,D}\cap B_L, \, B_L^c \bigr ).
\end{equation}
In order to show \eqref{eq:TBS} we start by observing that \eqref{eq:Binclusion} gives
\begin{equation}\label{eq:L-r}
E_o \mathcal{R} \Bigl (B_L\setminus A_{L,D}, B_L^c\Bigr ) \geq (L-D)P_o \bigl (\tau_{ (B_L\setminus A_{L,D})^c}<\tau_o^+ \bigr ).
\end{equation}
This is in fact a crude lower bound on the expected number of steps that a simple random walk started at $o$ has to make in order to exit the set $B_{L-D}\subset B_L\setminus A_{L,D} $, but it will be enough for our purposes.
In order to obtain \eqref{eq:TBS}, we will show that the following holds:
\begin{equation}\label{eq:Delta-r}
E_o \mathcal{R} \bigl (A_{L,D}\cap B_L, B_L^c \bigr )\leq D \d^D P_o \bigl (\tau_{ (B_L\setminus A_{L,D})^c}<\tau_o^+ \bigr )  .
\end{equation}
A way to see why the above is true is the following.
By construction, every vertex $x\in A_{L,D}\cap B_L$ is at some distance $ r\leq D$ from $ B_L^c$. 
Hence, whenever a random walk starts at $x\in A_{L,D}\cap B_L$ with $d(x,  B_L^c)=r$, it has probability bounded from below by $1/\d^r\geq 1/\d^D$
of exiting $B_L$.
(This is clear since the above is a lower bound on the probability of taking  $D$ steps in the same direction to reach $B_L^c$.)
In particular, the random variable representing the time needed by the random walk started at $x$ to exit $B_L$ is stochastically dominated by a geometric random variable with success parameter $\d^{-D}$.
If after $D$ steps the random walk has not exited $B_L$, then we just iterate the above, which implies \eqref{eq:Delta-r}.
Using the fact that $L\geq L_0$, where $L_0$ can be chosen as in \eqref{eq:choiceL} 
establishes \eqref{eq:TBS}, which concludes the proof of Lemma \ref{lem:PropertiesGreen}.
\end{proof}

\section*{Acknowledgements}
This work started when E.\ Candellero was affiliated to the University of Warwick and L.\ Taggi was affiliated to the Technische Universit\"at Darmstadt.
A.\ Stauffer and L.\ Taggi acknowledge support from EPSRC Early Career Fellowship EP/N004566/1,
L.\ Taggi acknowledges support from DFG  German Research Foundation BE 5267/1.

\bibliographystyle{amsalpha}
\bibliography{Bibliography}

\end{document}